\title[Minimax-optimal and Locally-adaptive Online Nonparametric Regression]{Minimax-optimal and Locally-adaptive \\ Online Nonparametric Regression}
\pgfplotsset{compat=1.7}
\newcommand{\C}{\mathscr C}
\newcommand{\R}{\mathbb{R}} 
\newcommand{\X}{\mathcal{X}} 
\newcommand{\ind}[1]{\mathds{1}_{ #1 }}
\newcommand{\sumT}{\sum_{t=1}^T} 
\newcommand{\F}{\mathcal{F}} 
\newcommand{\N}{\mathcal{N}} 
\newcommand{\Lcal}{\mathcal{L}} 
\newcommand{\T}{\mathcal{T}} 
\newcommand{\Pcal}{\mathcal{P}} 
\newcommand{\p}{\mathrm{p}} 
\newcommand{\depth}{\mathrm{d}} 
\renewcommand{\root}{\mathrm{root}}
\newcommand{\Reg}{\operatorname{Reg}}
\renewcommand{\p}{\mathbf{p}}
\newcommand{\w}{\mathbf{w}}
\newcommand{\g}{\mathbf{g}}
\newcommand{\W}{\mathcal{W}}
\newcommand{\argmin}{\operatornamewithlimits{arg \, min}}
\renewcommand{\L}{\mathcal{L}}
\renewcommand{\p}{\mathrm{p}} 
\renewcommand{\root}{\mathrm{root}}
\renewcommand{\le}{\leqslant}
\renewcommand{\leq}{\leqslant}
\renewcommand{\ge}{\geqslant}
\renewcommand{\geq}{\geqslant}
\newcommand{\nosemic}{\renewcommand{\@endalgocfline}{\relax}}
\newtheorem{cor}{Corollary}
\newtheorem{defi}{Definition}
\newtheorem{assum}{Assumption}
\begin{document}

\maketitle

\begin{abstract}
We study adversarial online nonparametric regression with general convex losses and propose a parameter-free learning algorithm that achieves minimax optimal rates. Our approach leverages chaining trees to compete against Hölder functions and establishes optimal regret bounds. While competing with nonparametric function classes can be challenging, they often exhibit local patterns - such as local Hölder continuity - that online algorithms can exploit. Without prior knowledge, our method dynamically tracks and adapts to different Hölder profiles by pruning a core chaining tree structure, aligning itself with local smoothness variations. This leads to the first computationally efficient algorithm with locally adaptive optimal rates for online regression in an adversarial setting. Finally, we discuss how these notions could be extended to a boosting framework, offering promising directions for future research.
\end{abstract}

\begin{keywords}%
  Online Learning, Local Adaptivity, Nonparametric Regression
\end{keywords}

\section{Introduction}

Observing a stream of data \(x_1, x_2, \dots\), an online regression algorithm sequentially predicts a function \(\hat f_t\in \mathbb{R}^{\mathcal{X}}\) at each time step \(t \geq 1\) based on the current input \(x_t \in \mathcal{X} \subset \mathbb{R}^d\), where \(d \geq 1\). The accuracy of these predictions is measured using a sequence of convex loss functions \((\ell_t)_{t \geq 1}\). Examples include the absolute loss \(\ell_t(\hat y) = |\hat y - y_t|\) and the squared loss \((\hat y - y_t)^2\), for some observation \(y_t \in \R\).  
The performance of an online regression algorithm is evaluated through its \emph{regret} relative to a competitive class of functions \(\mathcal{F} \subset \mathbb{R}^{\mathcal{X}}\), defined over a time horizon \(T \geq 1\) as  

\begin{equation}
    \label{eq:reg}
    \Reg_T(f) := \sum_{t=1}^{T} \ell_t(\hat f_t(x_t)) - \sum_{t=1}^{T} \ell_t(f(x_t))\,, \qquad \forall f \in \F.
\end{equation}

The function class \(\mathcal{F}\) is typically chosen to capture smooth or structured relationships in the data, such as Lipschitz functions, which are commonly used to model nonparametric regression problems.  

Unlike traditional batch regression methods, which train models on the full dataset \(\{(x_s, \ell_s)\}_{s=1}^{T}\), online regression algorithms - see \citet{cesa2006prediction} for a reference textbook - make predictions sequentially, updating \(\hat f_t\) at each step using only past observations \(\{(x_s, \ell_s)\}_{s=1}^{t-1}\). This sequential and adaptive learning paradigm allows algorithms to capture complex and evolving patterns in the data without requiring strong assumptions, such as i.i.d. observations.

A fundamental goal in online regression is to design algorithms that are \emph{minimax-optimal} in an adversarial setting, meaning they achieve the best possible regret guarantees over the worst-case data sequence - see \citet{rakhlin2014online, rakhlin2015online}. Existing methods, such as those of \citet{gaillard2015chaining, cesa2017algorithmic}, can attain minimax rates when the regularity of the functions in the competitive class is known beforehand, but they do not extend to cases where the function's smoothness varies across the domain, requiring a more flexible and adaptive strategy. Later, \citet{kuzborskij2020locally} developed a locally adaptive algorithm, but it achieves a suboptimal regret rate. Thus, designing algorithms that adapt locally to unknown regularities and variations while maintaining optimal regret guarantees remains a key open challenge.

In this work, we propose a computationally efficient online learning algorithm that achieves \emph{locally adaptive minimax regret} without requiring prior knowledge of the competitor’s regularity. Our method builds on chaining trees and leverages an adaptive pruning mechanism that dynamically adjusts to local smoothness variations in the competitor function. Inspired by prior work on tree-based online learning \citep{kuzborskij2020locally}, we introduce a core tree structure that selects prunings in an optimal way, ensuring adaptivity to different Hölder profiles. This leads to the first online regression algorithm that is both minimax-optimal and locally adaptive, bridging the gap between computational efficiency, minimax rates, and local adaptivity. Additionally, our algorithm is general and applies to both convex and exp-concave loss functions, achieving optimal regret guarantees under mild assumptions. Finally, we validate our theoretical results with numerical experiments\footnote{The code to reproduce all numerical experiments can be found \href{https://github.com/paulliautaud/Minimax-Locally-Adaptive-Online-Regression}{here}.} demonstrating the practical benefits of our approach.

As a conclusion and perspective, we highlight how our approach shares similarities with boosting’s iterative refinement process and discuss how this connection could inspire future work in online regression.

\subsection{Related work}

\subsubsection{Online nonparametric regression}  

\citet{vovk2006metric} introduced online nonparametric regression with general function classes. \citet{cesa2006prediction} developed an algorithm that exploits loss functions with good curvature properties, such as exp-concavity, to achieve fast regret rates in adversarial settings. \citet{rakhlin2014online} further advanced the minimax theory, providing a non-polynomial algorithm that is optimal for regret in cumulative squared errors of prediction. This theory was later extended to general convex losses in \citet{rakhlin2015online}.  
A significant step toward computational efficiency was made by \citet{gaillard2015chaining, cesa2017algorithmic} designed a polynomial-time chaining algorithm that achieves minimax regret when the regularity of the competitor is known. They also observed that the same algorithm, with a different tuning, remains minimax-optimal for general convex losses.  

In the batch statistical setting with i.i.d. data, the convergence rates of tree-based aggregation methods have been primarily studied in the context of random forests; see \citet{biau2016guided} for a survey. Avoiding early stopping and overfitting, the purely random forests of \citet{arlot2014analysis} achieve minimax rates for i.i.d. nonparametric regression. Closer to our setting, \citet{mourtada2017universal} studied the aggregation of Mondrian trees trained sequentially but in a batch (i.i.d.) statistical framework. While their method adapts to the regularity of the unknown regression function in well-specified settings, it does not extend to adversarial environments.

\subsubsection{Regret against $\alpha$-Hölder competitors and local adaptivity}

Considering $\mathcal F$ as the set of Lipschitz functions ($\alpha = 1$ in Equation \eqref{eq:holder_func}) for any constant $L > 0$, \citet{hazan2007online} introduced the corresponding minimax regret. They proved that for $d=1$, the minimax rate is $O(\sqrt{L T})$ for any convex loss, motivating the design of an algorithm that localizes at an optimal rate depending on $L$. The knowledge of $L$ is crucial for their procedures to prevent regret from growing linearly with $L$.  
Going one step further, \citet{kuzborskij2020locally} demonstrated the adaptability of tree-based online algorithms by introducing an oracle pruning procedure in the regret analysis, given a core tree. Tracking the best pruning goes back to \citet{helmbold1995predictingpruning} and \citet{margineantu1997pruning}. \citet{kpotufe2013regression} also designed adaptive pruning algorithms based on trees to partition the instance space $\mathcal{X}$ optimally and sequentially. Competing with an oracle pruning in nonparametric regression enables adaptation to the local regularities $(L,\alpha)$ of $\alpha$-Hölder continuous functions - see \eqref{eq:holder_func}. Indeed, the implicit multi-resolution nature of pruning allows the depth of the leaves to align with local Hölder constants: the larger the constant, the deeper the pruning, as finer partitions are needed to capture variations in the function.  

However, existing methods either require prior knowledge of local Hölder constants in and the exponent rate $\alpha$, or they fail to attain minimax regret rates in polynomial time. The problem of designing a computationally efficient, minimax-optimal algorithm that adapts to local regularities remained open. One solution to this problem relies on an adaptive pruning approach on chaining trees, dynamically tracking and aligning with different Hölder profiles to adjust the depth of partitioning in an online manner. We propose an algorithm that efficiently adapts to local smoothness variations (both in $L$ and $\alpha$) without requiring prior knowledge of the underlying function regularities. Table~\ref{tab:comparison} presents an overview of our main result alongside previous advancements in the field of online (and local) nonparametric regression.

\begin{table}
\centering
\begin{tabular}{clc}
\toprule
\textbf{References} & \textbf{Assumptions} & \textbf{Upper bound} \\
\midrule
\multirow{2}{*}{This paper} 
    & $(\ell_t)$ exp-concave, $L > 0$ unknown & $\textstyle \min\big\{\sqrt{LT},L^{\frac{2}{3}}T^{\frac{1}{3}}\big\}$ \\
    & $(\ell_t)$ convex, $L > 0$ unknown & $\sqrt{LT}$ \\
\midrule
\citetalias{kuzborskij2020locally} 
    & $(\ell_t)$ square loss, $L > 0$ unknown & $\sqrt{LT}$ \\
\midrule
\multirow{2}{*}{\citetalias{hazan2007online}} 
    & $(\ell_t)$ absolute loss, $L > 0$ known & $L^{\frac{1}{3}}T^{\frac{2}{3}}$ \\
    & $(\ell_t)$ square loss, $L > 0$ known & $\sqrt{LT}$ \\
\midrule
\citetalias{gaillard2015chaining} 
    & $(\ell_t)$ square loss, $L = 1$ known & $T^{\frac{1}{3}}$ \\
\midrule
\citetalias{cesa2017algorithmic} 
    & $(\ell_t)$ convex, $L = 1$ known & $\sqrt{T}$ \\
\bottomrule
\end{tabular}
\caption{Comparison of regret guarantees for recent algorithms in online nonparametric regression with dimension $d = 1$ and smoothness rate $\alpha = 1$.}
\label{tab:comparison}
\end{table}



\subsection{Contributions and outline of the paper}  

We first present, in Section \ref{section:minimax_CT}, a parameter-free online learning method that leverages a chaining tree structure and achieves minimax regret over $\alpha$-Hölder continuous functions with global exponent rate $\alpha \leq 1$. Next, in Section \ref{section:locally_adaptive_regression}, we introduce a core tree adaptive algorithm that dynamically tracks and adjusts to local smoothness variations through an adaptive pruning mechanism, enabling it to efficiently compete against functions with different local regularities. We prove that our approach achieves an optimal locally adaptive regret bound in an adversarial setting. In particular, we show that our algorithm adapts to the curvature of the loss functions and remains optimal for both general convex and exp-concave losses.

Finally, we include numerical experiments in the supplementary materials (Appendix \ref{appendix:xp}) to illustrate our results on a synthetic dataset. As interesting perspectives, we also draw connections between our approach and boosting techniques, suggesting a potential foundation for a boosting theory in adversarial online regression.  

\section{Minimax regret with chaining trees: a parameter-free online approach}
\label{section:minimax_CT}

\paragraph{Setting and notations.}

We consider that data $x_1,x_2,\dots \in \X$ arrive in a stream. At each time step \(t \geq 1\), the algorithm updates $\hat f_t$, receives \(x_t \in \mathcal X\) and predicts \(\hat f_t(x_t) \in \R\).
Then, a loss function \(\ell_t\ : \R \to \R\) is disclosed. The learner incurs loss \(\ell_t(\hat f_t(x_t))\) and considers gradients to update strategies for time \(t+1\).
We assume that  $(\ell_t)$ are convex, $G$-Lipschitz and attain one minimum within \([-B, B]\), for some \(B > 0\). The input space \(\X\) is a bounded subspace of \(\R^d\), \(d \geq 1\). We write $|\X'| = \sup_{x,x'\in \X'} \|x-x'\|_\infty < \infty$ for any $\X' \subset \X$ and \([N] = \{1,\dots,N\}\) for \(N \ge 1\). 
\par
In this section, we present our first contribution: an online learning algorithm (Algorithm \ref{alg:training_CT}) that leverages a specialized decision tree structure, referred to as chaining trees, which we introduce in the next section. Specifically, we establish in Theorem \ref{theorem:minimax_CT_param_free} that our procedure achieves minimax-optimal regret in nonparametric regression over the class of Hölder-continuous functions.

\subsection{Chaining tree} 

Tree-based methods are conceptually simple yet powerful - see \citet{breiman2017classification}. They consist in partitioning the feature space into small regions and then fitting a simple model in each one.
Given \(\X \subset \R^d\), a \emph{regular decision tree} \((\T, \bar \X, \bar \W ) \) is composed of the following components: \label{def:regular_decision_tree}
\vspace{-0.4cm}
\begin{wrapfigure}{r}{0.3\textwidth}
  \begin{center} \vspace{-0.4cm}
 \begin{tikzpicture}[every node/.style={scale=0.8},scale=0.35]
  \node (p1) at (0,1) {};
  \node (p2) at (3,1) {} ;
  \node (p3) at (3,2) {};
  \node (p4) at (10,2) {};
  \node (c1) at (0,8) {};
  \node (c2) at (5,8) {};
  \node (c3) at (5,7) {};
  \node (c4) at (10,7) {};
    \draw[black, line width=1pt] (0,10)--(10,10) node[midway,above]{\(\theta_1\)};
   \draw[blue, line width=1pt] (0,4)--(2.5,4) node[midway,above]{$\theta_4$};
    \draw[blue, line width=1pt] (2.5,5)--(5,5) node[midway,above]{$\theta_5$};
    \draw[blue,line width=1pt] (2.5,4)--(2.5,5) ;
   \draw[red,line width=1pt] (c1)--(c2) node[midway,above]{\(\theta_2\)};
   \draw[red,line width=1pt] (0,8)--(5,8)--(5,7)--(10,7) ;
   \draw[red,line width=1pt] (c3)--(c4) node[midway,above]{\(\theta_3\)};
    \draw[blue, line width=1pt] (5,4.5)--(7.5,4.5) node[midway,above]{$\theta_6$};
    \draw[blue, line width=1pt] (7.5,4)--(10,4) node[midway,above]{$\theta_7$};
    \draw[blue,line width=1pt] (7.5,4.5)--(7.5,4) ;
    \draw[blue,line width=1pt] (5,4.5)--(5,5) ;
    \node () at (10.5,10.1) {\(\X\)};
    \fill (3.15,10) circle (5pt);
    \node at (3.15,10.8) {$x$};
    \fill (3.15,8) circle (5pt);
    \fill (3.15,5) circle (5pt);
    \draw[dash pattern=on 2pt off 2pt, thick] (3.15,10)--(3.15,5) ;
    \draw[->,>=latex,thick] (4,2.6) to[in=-45, out=90] (3.2,4.9) ;
    \node[below] at (5,2.6) {$\texttt{path}_\T(x)=\{1,2,5\}$};
    \node[below] at (5,1.5) {$\hat f(x) = \theta_1 + \theta_2 + \theta_5$};
  \end{tikzpicture}
  \end{center}
  \vspace{-0.55cm}
  \caption{Example of a CT over $\X \subset \R$.}
\label{fig:schematic_CT}
\end{wrapfigure}
\begin{itemize}[nosep,topsep=-\parskip]
    \item a finite rooted ordered regular tree $\T$ of degree $\deg(\T)$, with nodes $\N(\T)$ and leaves or terminal nodes \(\Lcal(\T) \subset \N(\T)\).  The root and depth of $\T$ are respectively denoted by $\mathrm{root}(\T)$ and $\depth(\T)$. Each interior node $n \in \N(\T) \backslash \Lcal(\T)$ has $\deg(\T)$ children. The parent of a node $n$ is referred to as $\p(n)$ and its depth as $\depth(n)$;
    \item a family of sub-regions \(\bar \X = \{\X_n, \, n \in \N(\T)\}\) consisting of subsets of $\X$ such that for any interior node $n$, $\{\X_m: \p(m) = n\}$ forms a  partition of $\X_n$; 
    \item a family of prediction functions \(\bar \W = \{h_n:\X \to \R, n \in \N(\T)\}\) associated to each node such that $h_n(x) = 0$ for all $x \notin \X_n$.
\end{itemize}

The standard method of \citet{breiman2017classification} for predicting with a decision tree is to use the partition induced by the leaves $\sum_{n \in \L(\T)} h_n(x)$, $x \in \X$. On the contrary, the chaining tree that we define below, preforms multi-scale predictions by combining the predictions from all nodes.

\begin{defi}[Chaining-Tree]
    \label{def:chaining_tree}
    A Chaining-Tree (CT)  prediction function $\hat f$ is defined as 
    \[
        \hat f(x) = \sum_{n \in \N(\T)} h_n(x) \,, \qquad x \in \X\subset \R^d\,,
    \]
    where the regular prediction tree \((\T, \bar \X, \bar \W) \) satisfies:
    \begin{itemize}[nosep, topsep=-\parskip]
    \item the prediction functions $h_n$ are constant $h_n(x) = \theta_n \ind{x \in \X_n}$,  $\theta_n \in \R$. We denote them by $\theta_n$ by abuse of notation;
    \item the degree  $\deg(\T) = 2^d$ and for any interior node $n$, $\{\X_m: \p(m) = n\}$ forms a regular partition of $\X_n$ in infinite norm. In particular, this implies $|\X_m| = |\X_{\p(m)}|/2$.
\end{itemize} 
\end{defi}

We provide a schematic illustration in Figure \ref{fig:schematic_CT}. Chaining trees are closely related to the chaining technique introduced by \citet{dudley1967sizes}, which is at the core of algorithms addressing function approximation tasks. This method involves a sequential refinement process, that is - roughly speaking - growing a sequence of refining approximations over a function space. It was first introduced to design concrete online learning algorithm with optimal rates by \citet{gaillard2015chaining}. 

\subsection{First algorithm: the online training of a chaining-tree}

We introduce in this section an explicit Algorithm \ref{alg:training_CT} to sequentially train our CT $\T$ over time.

\begin{algorithm2e}[H]
\caption{Training CT $\T$ at time $t \geq 1$}\label{alg:training_CT}
\SetKwInOut{Input}{Input}
\SetKwInOut{Output}{Output}
\Input{$(\theta_{n,t})_{n\in \N(\T)}$ (node predictors of $\T$), $(g_{n,t})_{n \in \N(\T)}$ (gradients - later specified).}
\For{$n \in \N(\T)$}{
Predict $\hat f_t(x_t) =\sum_{n\in \N(\T)} \theta_{n,t}\ind{x_t \in \X_n}$\;
Find $\theta_{n,t+1} \in \R$ to approximately minimize 
\begin{equation}
\label{eq:grad-step}
 \theta_n \mapsto \ell_t(\hat f_{-n,t}(x_t) + \theta_n\ind{x_t \in \X_n}) \quad \text{with} \quad \hat f_{-n,t}(x_t) = \hat f_t(x_t) - \theta_{n,t}\ind{x_t \in \X_n}
\end{equation}
using gradient $g_{n,t} = \left[\frac{\partial \ell_t\big(\hat f_{-n,t}(x_t) + \theta_n\ind{x_t \in \X_n}\big)}{\partial \theta_n}\right]_{\theta_n = \theta_{n,t}}$.
}
\Output{$(\theta_{n,t+1})_{n\in \N(\T)}$}
\end{algorithm2e}
To keep things concise, the gradient minimization step in \eqref{eq:grad-step} is expressed as:  \begin{equation}
    \label{eq:grad-step_CT}
    \theta_{n,t+1} \gets \texttt{grad-step}(\theta_{n,t},g_{n,t})\,.
    \end{equation}
where the function $\texttt{grad-step}(\theta,g)$ stands for \emph{any} rule that updates $\theta \in \R$ from time $t$ to $t+1$ using some gradient $g \in \R$. 

\paragraph{Computation of the gradients.}  
At each time $t \geq 1$ and for each node $n \in \N(\T)$, the subgradient $g_{n,t}$ of the last loss $\ell_t(\hat f_t(x_t))$ with respect to $\theta_{n,t}$ can be computed explicitly using the chain rule:  

\begin{equation}
\label{eq:gradient_CT}
    g_{n,t} = \left[\frac{\partial \ell_t(\hat f_{-n,t}(x_t)+\theta\ind{x_t \in \X_n})}{\partial \theta}\right]_{\theta = \theta_{n,t}} = \ell_t'\big(\hat f_t(x_t)\big) \ind{x_t \in \X_n} \,,\qquad n \in \N(\T)\,,
\end{equation}  

which simplifies the computation of subgradients, as the dependence on $n$ only involves the indicator function. More precisely, the subroutine \texttt{grad-step}, detailed below, does not perform any update (i.e., $\theta_{n,t+1} = \theta_{n,t}$) when the gradient is zero (i.e., $x_t \notin \X_n$). All nonzero updates use the same subgradient $g_t = \ell_t'\big(\hat f_t(x_t)\big)$, which is based on the derivative of the loss of the strong learner's prediction.  

\paragraph{Online gradient optimization subroutine.} 
We now detail the subroutine \texttt{grad-step}, which, in our analysis, can be any online optimization algorithm satisfying the following regret upper-bound.

\begin{assum}
\label{assumption:parameter_free}
   Let $g_{n,1},\dots, g_{n,T} \in [-G,G]$ for $T \geq 1$, $G >0$, and $n \in \N(\T)$. We assume that the  parameters $\theta_{n,t}$ starting at $\theta_{n,1} \in \R$ and following the update~\eqref{eq:grad-step_CT} satisfy the linear regret bound: 
   \[
    \sum_{t=1}^T g_{n,t}(\theta_{n,t} - \theta_n) \le |\theta_n - \theta_{n,1}|\Big(C_1 \textstyle \sqrt{\sum_{t=1}^T|g_{n,t}|^2} +  C_2G\Big) \,,
    \]
    for some $C_1, C_2 >0$ and every $\theta_n \in \R$.
\end{assum} 

Such an assumption is satisfied by so-called \emph{parameter-free} online convex optimization algorithms, such as those described in \citet{cutkosky2018black, mhammedi2020lipschitz, orabona2016coin}. Specifically, by considering only the time steps where the gradients are nonzero, $T_n = \{1\leq t\leq T : g_{n,t} \neq 0 \}$, their procedure entails $O(G|\theta_n|\sqrt{|T_n|})$. Note that the constants $C_1,C_2$ often hide logarithmic factors in $T,G$ or $|\theta_n|$. These algorithms require no parameter tuning (though some need prior knowledge of $G$) and provide a regret upper bound that automatically scales with the parameter norm $|\theta_n|$. 
This property is crucial in analyzing our CT, where each node is tasked with correcting the errors of its ancestors in a more refined subregion of the input space. This multi-resolution aspect of the predictions leads us to consider $\theta_n$ that approach zero as $\depth(n)$ increases.

\paragraph{First result.} In the theorem below, we show that when resorting to such a subroutine into Algorithm~\ref{alg:training_CT}, our results are minimax-optimal with respect to $\C^\alpha(\X,L)$ the class of $\alpha$-Hölder continuous functions over $\X$ defined with $L>0$ and $\alpha \in (0,1]$ by
  \begin{equation}
  \label{eq:holder_func}
    \textstyle \C^\alpha(L,\X) := \big\{f:\X \to \R: |f(x) - f(x')| \leq L \|x - x'\|_\infty^\alpha\, , \; x,x'\in \X  \text{ and } \sup_{x \in \X} |f(x)| \leq B \big\}\, ,
  \end{equation}
with $B > 0$ such that $\ell_t$ has minimum lying in $[-B,B]$. We will refer to $L$ as the \emph{Hölder constant} and $\alpha$ to as the \emph{smoothness rate} or \emph{exponent}.

\begin{theorem}
    Let $T \geq 1, (\T, \bar \X, \bar \W_1)$ be a  CT with $\X_{\root(\T)} = \X$, $\theta_{n,1} = 0$ for all $n \in \N(\T)$ and $\depth(\T) = \frac{1}{d} \log_2 T$. Then, Algorithm~\ref{alg:training_CT} applied with a \emph{\texttt{grad-step}} procedure satisfying Assumption~\ref{assumption:parameter_free} achieves the regret upper bound
    \[
      \sup_{f\in \C^\alpha(\X,L)} \Reg_T(f) \leq GB(C_1\sqrt{T}+C_2) + GL|\X|^\alpha 
        \begin{cases}             
            \big(  \Phi(\frac{d}{2}-\alpha) C_1 + 4 C_2 + 1 \big) \sqrt{T} & \text{if $d< 2\alpha$}\,, \\ 
            \big(\frac{C_1}{d} \log_2 T + 4 C_2 + 1 \big) \sqrt{T} & \text{if $d=2\alpha$} \,, \\
            \big( \Phi(\frac{d}{2}-\alpha) C_1 + 4 C_2 + 1 \big) T^{1-\frac{\alpha}{d}} & \text{if $d > 2\alpha$} \,, 
    \end{cases}
    \]
    for any $L >0$ and $\alpha \in (0,1]$, where $\Phi(u) = |2^{u} - 1|^{-1}$.
    \label{theorem:minimax_CT_param_free}
\end{theorem}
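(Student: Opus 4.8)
The plan is to decompose the regret over a chaining tree into a sum of per-node regrets, approximate an arbitrary Hölder function $f$ by a telescoping sum of node-wise corrections along the tree, and then control the resulting depth-indexed sum using Assumption~\ref{assumption:parameter_free}. First I would fix $f \in \C^\alpha(\X,L)$ and, for each node $n \in \N(\T)$, define the ``oracle'' coefficient $\theta_n^\star$ to be the increment that the best piecewise-constant refinement assigns at node $n$: concretely, for the path from the root to a point $x$, the partial sums $\sum_{n \in \mathrm{path}_\T(x), \depth(n)\le j} \theta_n^\star$ should track the local average of $f$ over $\X_n$ at resolution $j$. Because $f$ is $\alpha$-Hölder and $|\X_n| = |\X|\,2^{-\depth(n)/d}$ (from the regular binary split in each of the $d$ coordinates), the increment at depth $j$ satisfies $|\theta_n^\star| \lesssim L |\X|^\alpha 2^{-\alpha j/d}$, and since $\X_{\root} = \X$ with $|f|\le B$ the root term is bounded by $B$. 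The telescoping/chaining structure then gives that $\hat f^\star(x) := \sum_{n\in\N(\T)} \theta_n^\star \ind{x\in\X_n}$ approximates $f(x)$ up to the resolution of the deepest leaves, i.e. an error $\lesssim L|\X|^\alpha 2^{-\alpha \depth(\T)/d} = L|\X|^\alpha T^{-\alpha/d}$, and multiplied by $T$ time steps and the Lipschitz constant $G$ this contributes $GL|\X|^\alpha T^{1-\alpha/d}$ — which is dominated by (or matches) each of the stated three cases.

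Next I would handle the algorithmic regret. Using convexity of $\ell_t$ and the gradient identity~\eqref{eq:gradient_CT}, the total regret against $\hat f^\star$ is bounded by $\sum_t \ell_t'(\hat f_t(x_t))\big(\hat f_t(x_t) - \hat f^\star(x_t)\big)$, and since exactly one node per depth level is active at $x_t$ (the one on $\mathrm{path}_\T(x_t)$) this splits as a sum over nodes $n$ of $\sum_{t} g_{n,t}(\theta_{n,t} - \theta_n^\star)$. Here I invoke Assumption~\ref{assumption:parameter_free} at each node with the comparator $\theta_n^\star$ and with $\theta_{n,1}=0$, obtaining a bound $|\theta_n^\star|\big(C_1\sqrt{\sum_t |g_{n,t}|^2} + C_2 G\big) \le |\theta_n^\star| G\big(C_1\sqrt{|T_n|} + C_2\big)$, where $|T_n|$ counts the rounds in which $x_t\in\X_n$. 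Summing over all nodes at a fixed depth $j$: the key observation is $\sum_{n:\depth(n)=j}\sqrt{|T_n|}\le \sqrt{2^j}\sqrt{\sum_{n:\depth(n)=j}|T_n|} = \sqrt{2^j T}$ by Cauchy–Schwarz (the $\X_n$ at depth $j$ partition $\X$, so the counts sum to $T$), while $\sum_{n:\depth(n)=j} 1 = 2^j$ for the $C_2$ term — but weighted by $|\theta_n^\star|\lesssim L|\X|^\alpha 2^{-\alpha j/d}$ this needs a slightly more careful count since only the $|T_n|$ nonzero $n$'s matter; using $|\theta_n^\star|\le L|\X|^\alpha 2^{-\alpha j /d}$ uniformly and $\#\{n:\depth(n)=j\}=2^j$ gives the clean geometric series. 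Thus depth $j$ contributes $\lesssim GL|\X|^\alpha 2^{-\alpha j/d}\big(C_1 2^{j/2}\sqrt T + C_2 2^{j}\big)$ plus the root term $GB(C_1\sqrt T + C_2)$.

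Finally I would sum over $j = 0,1,\dots,\depth(\T) = \tfrac1d\log_2 T$. The $C_1$ part gives $C_1 GL|\X|^\alpha\sqrt T \sum_j 2^{j(\tfrac12 - \tfrac{\alpha}{d})}$, a geometric series whose behavior trisects exactly as in the statement: if $d<2\alpha$ the ratio $2^{1/2-\alpha/d}<1$ so the sum converges to $\Phi(\tfrac d2-\alpha)=|2^{d/2-\alpha}\cdot\text{(something)}|^{-1}$ — more precisely the closed form $|2^{\alpha/d - 1/2}-1|^{-1}$-type constant, matching $\Phi$ evaluated appropriately after reindexing; if $d=2\alpha$ every term equals $1$ and there are $\tfrac1d\log_2 T + 1$ of them, yielding the $\tfrac{C_1}{d}\log_2 T$ factor; if $d>2\alpha$ the series is dominated by its last term $2^{\depth(\T)(1/2-\alpha/d)} = T^{1/2 - \alpha/d}/\sqrt{\,\cdot\,}$, and multiplying by $\sqrt T$ gives $T^{1-\alpha/d}$ up to the constant $\Phi(\tfrac d2-\alpha)$. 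The $C_2$ part, $\sum_j 2^{j(1-\alpha/d)}$, is always in the $d>2\alpha$-like regime (ratio $>1$ since $\alpha\le 1\le d$ is not assumed, but $1-\alpha/d>0$ always as $\alpha\le 1$ and... actually when $d=1,\alpha=1$ it is constant) and is dominated by $j=\depth(\T)$ giving $2^{\depth(\T)} = T^{1/d}$; combined with the $\sqrt T$ already present one must check it stays within the claimed $4C_2\sqrt T$ or $4C_2 T^{1-\alpha/d}$ — this works because $|\theta_n^\star|$ at the deepest level times the count $2^{\depth(\T)}$ telescopes favorably, and bundling it with the approximation error gives the ``$4C_2 + 1$'' constant. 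The main obstacle I anticipate is the bookkeeping in the second and third paragraphs: getting the oracle coefficients $\theta_n^\star$ defined so that both $|\theta_n^\star|\lesssim L|\X|^\alpha 2^{-\alpha\depth(n)/d}$ and the pointwise approximation $|\hat f^\star(x)-f(x)|\lesssim L|\X|^\alpha T^{-\alpha/d}$ hold simultaneously, and then tracking the three constants through the geometric sums so they land exactly on $\Phi(\tfrac d2-\alpha)C_1 + 4C_2 + 1$ rather than some looser multiple — the dichotomy $d\lessgtr 2\alpha$ itself is just the sign of the exponent $\tfrac12-\tfrac\alpha d$ and falls out automatically.
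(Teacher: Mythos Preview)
Your high-level strategy --- decompose the regret into an approximation term against a piecewise-constant $\hat f^\star$ plus an estimation term, parametrize $\hat f^\star$ via telescoping node increments $\theta_n^\star = f(x_n) - f(x_{\p(n)})$, invoke Assumption~\ref{assumption:parameter_free} node by node, and sum the resulting depth-indexed geometric series --- is exactly the paper's argument. The gap is in your tree parametrization. In the chaining tree of Definition~\ref{def:chaining_tree}, each interior node has $2^d$ children and the $\ell_\infty$-diameter \emph{halves} at every level: $|\X_n| = |\X|\,2^{-(\depth(n)-1)}$, not $|\X|\,2^{-\depth(n)/d}$. Correspondingly, there are $2^{d(m-1)}$ nodes at depth $m$, not $2^m$. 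Hence $|\theta_n^\star| \le L|\X|^\alpha 2^{-\alpha\,\depth(n)}$ (no division by $d$), and after Cauchy--Schwarz the depth-$m$ contribution to the $C_1$ term is $2^{m(d/2-\alpha)}\sqrt{T}$, not $2^{m(1/2-\alpha/d)}\sqrt{T}$.

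With your scalings and the stated $\depth(\T)=\tfrac{1}{d}\log_2 T$, the arithmetic actually fails for $d\ge 2$: your approximation error becomes $GL|\X|^\alpha\, 2^{-\alpha\,\depth(\T)/d}\, T = GL|\X|^\alpha\, T^{1-\alpha/d^2}$, and in the $d>2\alpha$ case your last geometric term is $2^{\depth(\T)(1/2-\alpha/d)}\sqrt{T}=T^{1/2+1/(2d)-\alpha/d^2}$ --- neither matches the claimed $T^{1-\alpha/d}$. (The identities you wrote, $2^{-\alpha\,\depth(\T)/d}=T^{-\alpha/d}$ and $2^{\depth(\T)(1/2-\alpha/d)}=T^{1/2-\alpha/d}$, would require $\depth(\T)=\log_2 T$, i.e.\ $d=1$.) Once you correct the two scalings above, the exponent $d/2-\alpha$ appears directly, the trichotomy on its sign is immediate, and the constants $\Phi(\tfrac d2-\alpha)C_1+4C_2+1$ fall out of the closed-form geometric sums exactly as you anticipated.
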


The proof of Theorem \ref{theorem:minimax_CT_param_free} is postponed to Appendix \ref{appendix:proof_theorem:minimax_CT_param_free}.

\paragraph{Minimax optimality and adaptivity to $L$ and $\alpha$.} Note that the above rates are minimax optimal for online nonparametric regression with convex losses over $\C^\alpha(\X,L)$, as shown by \citet{rakhlin2015online} that provides a non-constructive minimax analysis for this problem (see also \citet{rakhlin2014online}). For the case of low-dimensional settings, where \( d \leq 2\alpha \), our bound is in \( O((B+L)\sqrt{T}) \). However, it has been demonstrated in \cite{rakhlin2015online} that faster rates \( O(T^{\frac{1}{3}}) \) can be attained when dealing with exp-concave losses. In the next section, we will address this by making our algorithm adaptive to the curvature of the loss functions.
A similar chaining technique was applied by \citet{gaillard2015chaining} to design an algorithm with minimax rates for the square loss or \citet{cesa2017algorithmic} in the partial information setting. However, unlike these works, our Algorithm \ref{alg:training_CT} does not require prior knowledge of neither $L$ nor $\alpha$ and automatically adapts to them. This is achieved through the use of \emph{parameter-free} subroutines that satisfy Assumption~\ref{assumption:parameter_free} and automatically adapt to the norm of $\theta_n$. 

\paragraph{Comparison to standard adaptive OCO methods in $\R^{|\N(\T)|}$.}
A key point of Algorithm \ref{alg:training_CT} is its \emph{node-specific} descent, which differs from standard adaptive OCO optimizing a global parameter. 
For each node $n \in \N(\T)$ we obtain a regret upper-bound of $O(|\theta_n|\sqrt{\sum_t |g_{n,t}|^2})$ yielding an overall regret in $O(\sum_{n}|\theta_n|\sqrt{\sum_t |g_{n,t}|^2})$, with $g_{n,t}$ defined as in \eqref{eq:gradient_CT}. Notably, thanks to the structure of the chaining-tree, $g_{n,t} = 0$ when the data $x_t$ does not fall in the corresponding sub-region of node $n$ and this leads to an overall regret scaling as $O(G\sum_n |\theta_n|\sqrt{|T_n|})$ where $T_n$ is the set of time steps for which $g_{n,t} \neq 0$.

One may wonder whether Algorithm \ref{alg:training_CT} could be
reduced to an adaptive Online Mirror Descent (OMD) on a \emph{global} parameter $\boldsymbol{\theta} = (\theta_n)_{n \in \N(\T)} \in \R^{|\N(\T)|}$. This would result in an estimation regret bound, for any $p,q \geq 1$ such that $\frac{1}{p} + \frac{1}{q} = 1$, \[\textstyle O\Big(\|\boldsymbol{\theta}\|_p\sqrt{\sum_{t=1}^T \|\g_t\|_q^2}\Big) \quad \text{where} \quad \g_t = \nabla_{\boldsymbol{\theta}} \ell_t\big(\sum_{n \in \N(\T)} \theta_{n,t}\ind{x_t \in \X_n}\big) = (g_{n,t})_{n \in \N(\T)}\,,\] with $g_{n,t}$ as in \eqref{eq:gradient_CT}. Moreover, we have for $q \geq 2$
\begin{align}
\textstyle	\sum_n |\theta_n| \sqrt{\sum_t |g_{n,t}|^2} 
	& 	\leq \textstyle \|\boldsymbol \theta\|_p \Big(\sum_n \big(\sum_t |g_{n,t}|^2\big)^{\frac{q}{2}}\Big)^{\frac{1}{q}}   &\leftarrow \text{by H\"older's inequality}  \notag \\
	& 	\textstyle \leq \|\boldsymbol \theta\|_p \Big(\sum_t \big(\sum_n |g_{n,t}|^q\big)^{\frac{2}{q}}\Big)^{\frac{1}{2}} & \leftarrow \text{by Minkowski's inequality with $\frac{q}{2} \geq 1$} \notag \\
	&= \textstyle \|\boldsymbol\theta\|_p \sqrt{\sum_t \|\g_t\|_q^{2}}\,. \label{eq:comparison_OMD}
\end{align}

Remarkably, \eqref{eq:comparison_OMD} shows that our Algorithm~\ref{alg:training_CT} consistently achieves a lower regret compared to any global adaptive OMD subroutine for $q \geq 2$ - including adaptive version of OGD ($p=q=2$) and of EG ($p=1, q=\infty$).

Finally, in our analysis in Appendix \ref{appendix:proof_theorem:minimax_CT_param_free}, Proof of Theorem \ref{theorem:minimax_CT_param_free}, such an adaptive OCO method would result in an overall estimation regret of $O(\sqrt{T} \sum_{n} |\theta_n|)$. By grouping by the level of the CT $\T$ (see Equation \eqref{eq:estimation_error}), with $|\theta_n| \propto 2^{-\alpha m}, m \in [\depth(\T)]$, we get a regret of $O(2^{-\alpha m}|\{n:\depth(n) = m\}| \sqrt{T})$ for each level instead of 
$O(2^{-\alpha m}\sqrt{|\{n:\depth(n) = m\}| T})$, which is insufficient to recover the same minimax rates.

\paragraph{Complexity.} Although the formal definition of our algorithm requires constructing a decision tree with $|\N(\T)| = 2^{\depth(\T) d} = T$ nodes, it remains tractable, similar to the approach in \citet{gaillard2015chaining}. At each round, the input $x_t$ falls into one node per level of the tree constituting $\texttt{path}_\T(x_t) = \{n \in \N(\T): x_t \in \X_n\}$, since $\{\X_n, \depth(n) = m\}$ forms a partition of $\X$ for any depth $1\leq m \leq \depth(\T)$ - see Figure \ref{fig:schematic_CT} for schematic comprehension. Consequently, most subgradients in \eqref{eq:gradient_CT} are zero, and \texttt{grad-step} only needs to be called $\depth(\T) = \frac{1}{d} \log_2 T $ times per round, each using the same gradient $g_t$. Thus, the loop in Algorithm \ref{alg:training_CT} can be rewritten to explore only the nodes along $\texttt{path}_\T(x_t)$, significantly reducing computational complexity. The overall space complexity is at most $O(|\N(\T)|) = O(T)$. It can be improved noticing that nodes in the tree do not need to be created until at least one input falls into that node.

\paragraph{Unknown input space.} In practice our procedure can be easily extended to the case where $\X$ is unknown beforehand and is sequentially revealed through new inputs $x_t \in \R^d$ (similarly to \citet{kuzborskij2020locally}). This can be done either through a doubling trick (starting with $\X = [-1,1]^d$ and restarting the algorithm with an increased diameter by at least a factor of $2$ each time an input falls outside of the current tree) or by creating a new CT around $x_t$ 
that runs in parallel, whenever a new point $x_t$ falls outside the existing trees.

\section{Optimal and locally adaptive regret in online nonparametric regression}

\label{section:locally_adaptive_regression}

In the previous section, we demonstrated that our Algorithm \ref{alg:training_CT} achieves minimax regret $O(LT^{(d-\alpha)/d})$ compared to Hölder functions $\C^\alpha(\X,L)$. This bound scales linearly with the constant $L$ and raises the question of whether our approximation method could be adapted to fit subregions with lower variation. Our second contribution is an algorithm that adapts on the local Hölder profile of the competitor.  For any $f\in \C^\alpha(\X,L)$, $\alpha\in(0,1], L > 0$, and some subset $\X_n \subset \X$, the local Hölder constant $L_n(f)$ satisfies
\begin{equation}\label{eq:deflochold} L_n(f) \leq L \quad \text{and} \quad |f(x) - f(x')| \leq L_n(f) \|x - x'\|_\infty^\alpha \,,
\end{equation}
for every $x,x'\in \X_n$. Recall that we assume that for any $f \in \C^\alpha(\X,L), \sup_{x \in \X} |f(x)| \leq B$. We define $[\cdot]_B := \min(B,\max(-B,\cdot))$ the clipping operator in $[-B,B]$ and a uniform discretization grid $\Gamma$ with precision $\varepsilon = T^{-\frac{1}{2}}$ as the set of $K = \lceil 2B/ \varepsilon \rceil$ constants 
\vspace{-0.1cm}
\[\Gamma := \{\gamma_k = -B + (k-1)\varepsilon \,, k = 1,\dots, K\} \subset [-B,B].\]

\paragraph{Locally adaptive algorithm.} We base our predictions on a combination of several regular decision tree predictions (see Section \ref{def:regular_decision_tree}). The latter are sitting in nodes of a \emph{core tree} \((\T_0, \bar \X, \bar \W)\), with \(\bar \W = \{(\hat f_{n,k})_{k=1}^K, \, n \in \N(\T_0)\}\). 
In our main Algorithm \ref{alg:Local_Adapt_Algo}, referred to as \emph{Locally Adaptive Online Regression}, the core tree \(\T_0\) provides an average prediction at each time step \(t \geq 1\) as follows:
\[
\textstyle \hat f_t(x_t) = \sum_{n \in \N(\T_0)} \sum_{k=1}^K w_{n,k,t} \hat f_{n,k,t}(x_t)\,,
\]
where, for each pair \((n,k) \in \N(\T_0) \times [K]\) \begin{itemize}[nosep,topsep=-\parskip]
    \item \(\hat f_{n,k,\cdot}\) is a clipped predictor associated with a CT \(\T_{n,k}\) (see Definition \ref{def:chaining_tree}), rooted at \(\X_{\root(\T_{n,k})} = \X_n \in \bar \X\) and starting at \(\theta_{\root(\T_{n,k}),1} = \gamma_k \in \Gamma\), $\theta_{n',1} = 0$ for $n' \in \N(\T_{n,k}) \setminus \{\root(\T_{n,k})\}$;
    \item the weight \(w_{n,k,t}\) adjust the contribution of the predictor \(\hat f_{n,k,t}\) such that the sum of all weights over the tree satisfies \(\sum_{n \in \N(\T_0)} \sum_{k \in [K]} w_{n,k,t} = 1\) at any time \(t \geq 1\).
\end{itemize}


 First, Algorithm \ref{alg:Local_Adapt_Algo} sequentially trains the weights $(w_{n,k})_{(n,k) \in \N(\T_0) \times [K]}$ using two key subroutines: \texttt{weight} and \texttt{sleeping}, both inspired by classical expert aggregation methods. Specifically, the $\texttt{weight}(\tilde \w,\tilde \g)$ subroutine refers to any general algorithm updating weights $\tilde \w$ with a given gradient $\tilde \g$ and satisfies the following Assumption \ref{assumption:second_order_algo}.  

\begin{assum}
\label{assumption:second_order_algo}
Let \(\tilde \g_1, \dots, \tilde \g_T \in [-G,G]^{K\times|\N(\T_0)|}\), for \(T \ge 1\) and \(G > 0\). We assume that the weight vectors \(\tilde \w_t\), initialized with a uniform distribution \(\tilde \w_1\) and updated via \texttt{weight} in Algorithm \ref{alg:Local_Adapt_Algo}, satisfy the following linear regret bound:
\[
\textstyle
\sumT \tilde \g_t^\top \tilde \w_t  - \tilde g_{n,k,t} \leq C_3\sqrt{\log(K|\N(\T_0)|)\sumT \big(\tilde \g_t^\top \tilde \w_t - \tilde g_{n,k,t}\big)^2} + C_4G,
\]
for some constants \(C_3, C_4 > 0\) and for every \(n \in \N(\T_0)\), \(k \in [K]\).
\end{assum}

Well-established aggregation algorithms, such as those from \citet{gaillard2014second}, \citet{koolen2015second}, and \citet{wintenberger2017optimal}, exhibit such second-order linear regret bounds.  

\begin{algorithm2e}[h]
\caption{Locally Adaptive Online Regression}\label{alg:Local_Adapt_Algo}
\SetKwInOut{Input}{Input}
\SetKwInOut{Output}{Output}
\Input{A core regular tree \((\T_0, \bar \X, \bar \W)\) with root \(\X\), bounds $G,B > 0$.

Initial prediction functions $\hat f_{n,k,1}= \tilde f_{n,k,1} = \theta_{\root(\T_{n,k}),1}\ind{x \in \X_n}$ associated to CT $\T_{n,k}, k \in [K], n \in \N(\T_0)$.

Initial uniform weights \(\tilde \w_1 = (\tilde w_{n,k,1})_{n \in \N(\T_0), k \in [K]}\).}
\For{$t=1$ \KwTo $T$}{
Receive $x_t$\;
$\N_t \gets \texttt{path}_{\T_0}(x_t)$\; 
$\w_t \gets \texttt{sleeping}(\tilde \w_t,\N_t)$ \label{alg:line_sleeping}\;
Predict $\hat f_t(x_t) = \sum_{n \in \N_t} \sum_{k = 1}^K w_{n,k,t}\hat f_{n,k,t}(x_t)$ \label{alg:line_prediction} \;
\Comment{Update weights of $\T_0$}
Reveal gradient $\tilde \g_t = \nabla_{\tilde \w_t} \ell_t( \sum_{n \in \N_t} \sum_{k=1}^K \tilde w_{n,k,t} \hat f_{n,k,t}(x_t) + \sum_{n \notin \N_t} \sum_{k=1}^K \tilde w_{n,k,t} \hat f_t(x_t) )$ \;
Udpate $\tilde \w_{t+1} \gets \texttt{weight}(\tilde \w_t, \tilde \g_t)$ \label{alg:line_weight} \;
\For{\(n \in \N_t, k \in [K]\)}{
\Comment{Update CT $\T_{n,k}$}
Reveal gradient $g_{n,k,t} = \ell_t'(\tilde f_{n,k,t}(x_t))$ \;
Update $\tilde f_{n,k,t}$ associated to CT $\T_{n,k}$ using Algorithm \ref{alg:training_CT} with $g_{n,k,t}$  \label{alg:line_boosting} \; 
Clip local predictor as $\hat f_{n,k,t+1 }= \big[\tilde f_{n,k,t+1}\big]_B$ \label{alg:line_clipping}\;
}
}
\Output{$\hat f_{T+1} = \sum_{n,k} w_{n,k,T+1}\hat f_{n,k,T+1}$}
\end{algorithm2e}

Since $\T_0$ partitions the input space $\X$, only a subset $\N_t$ of the nodes in $\N(\T_0)$ contributes to predictions at each round \(t \geq 1\). The set of active nodes is determined by $\N_t \gets \texttt{path}_{\T_0}(x_t)$, which maps the data point $x_t$ to the active nodes $\{n \in \N(\T_0) : x_t \in \X_n\}$. This structure mirrors the \emph{sleeping experts} framework introduced by \citet{freund1997sleeping, gaillard2014second}, and we incorporate it as a \texttt{sleeping} subroutine in Algorithm \ref{alg:Local_Adapt_Algo}. 
The weights $\w_t$ are computed using the $\texttt{sleeping}(\tilde \w_t,\N_t)$ subroutine, defined as follows for all $k \in [K]$ and $n \in \N(\T_0)$:
\begin{equation}
\label{eq:weights_sleeping}
w_{n,k,t} = 
0 \quad \text{if } n \notin \N_t\,, \qquad 
w_{n,k,t} = \frac{\tilde w_{n,k,t}}{\sum_{n' \in \N_t} \sum_{k'=1}^K \tilde w_{n',k',t}}  \quad \text{otherwise.}
\end{equation}
This ensures that only the active nodes are contributing to the average prediction.

Second, our Algorithm \ref{alg:Local_Adapt_Algo} also employs Algorithm \ref{alg:training_CT} to independently train the CTs 
\vspace{-0.2cm}
\[\textstyle \{\T_{n,k} \, , (n,k) \in \N(\T_0)\times [K]\}\] that reside within \(\T_0\). For each \((n,k) \in \N(\T_0)\times [K]\), \(\T_{n,k}\) is initialized with \(\theta_{\root(\T_{n,k}),1} = \gamma_k\) and \(\theta_{n',1} = 0\) for all \(n' \in \N(\T_{n,k}) \setminus \{\root(\T_{n,k})\}\), and is then updated at each time $t \geq 1$ via Algorithm \ref{alg:training_CT} with a given gradient $g_{n,k,t}$. Then, the local predictors associated to $(\T_{n,k})$ are clipped in $[-B,B]$.

\paragraph{Pruning as local adaptivity.}

Pruning techniques are frequently employed in traditional statistical learning involving subtrees to reduce overfitting or simplify models. In this context, each pruned tree represents a localized profile corresponding to a partition of $\X$. Our Algorithm \ref{alg:Local_Adapt_Algo} strives to learn the oracle pruning strategy to compete effectively against any $\alpha$-Hölder continuous function.

\begin{defi}[Pruning]
   Let \((\T_0,\bar \X,\bar \W)\) be some regular tree with \(\bar \W = \{(\hat f_{n,k})_{k \in [K]}, n \in \N(\T_0)\}\). 
A \emph{pruning} or \emph{pruned regular decision tree} \((\T, \tilde \X, \tilde \W)\) consists in a subtree, i.e. \(\N(\T) \subset \N(\T_0)\), with root \(\X_{\mathrm{root}(\T)}=\X_{\mathrm{root}(\T_0)}\) and prediction functions \(\tilde \W = \{\hat f_{n,k_n}, n \in \N(\T), k_n \in [K]\} \subset \bar \W\). It predicts, at each time $t \geq 1$,
\vspace{-0.3cm}
   \[ \textstyle \hat f_{\T,t}(x)=\sum_{n \in \L(\T)} \hat f_{n,k_n,t}(x), \quad x \in \X\,.\]
\vspace{-0.2cm}
 We denote $\Pcal(\T_0)$ the set of all prunings of $\T_0$.
   \label{def:pruning}
\end{defi}

Note that a pruning is a decision tree whose predictions are induced by its leaves, contrary to the core tree \(\T_0\). In particular, a prediction made by a leaf of a pruning is inherited from the associated node in \(\T_0\) before pruning.
We provide some illustration in Figure \ref{fig:tree_and_pruning}.

\tikzstyle{sleep} = [circle, minimum width=6pt, draw=black, rounded corners=1mm, fill=none, inner sep=0pt, ultra thick]
\tikzstyle{nosleep} = [circle, minimum width=6pt, draw=black, fill=black, ultra thick, inner sep=0pt]
\tikzstyle{line} = [draw=none, ultra thick]
\tikzstyle{chain} = [minimum width=10pt, draw=none]

\begin{figure}[htbp]
    \floatconts{fig:tree_and_pruning}
    {\vspace{-0.4cm} \caption{Example of a core tree \(\T_0\) with depth \(\depth(\T_0)=3\), $d=1$, in Fig. \ref{fig:maintree}. We give 2 pruned tree instances \(\T_1\) for a given Lipschitz function $f_1$ in Fig. \ref{fig:prun1} and \(\T_2\) for a second profile $f_2$ in Fig. \ref{fig:prun2}. In Fig. \ref{fig:maintree} all nodes $\N(\T_0)$ are awaken and predictive while $\T_1$ in Fig. \ref{fig:prun1} (resp. $\T_2$ in Fig. \ref{fig:prun2}) predicts with $\hat f_{2,k_2}, \hat f_{3,k_3}$ sitting in its leaves $\L(\T_1)$ (resp. with $\hat f_{2,k_2}, \hat f_{6,k_6}, \hat f_{7,k_7}$ sitting in its leaves $\L(\T_2)$). \ding{55} represents a pruned node.}}
    {%
        \subfigure[\small Core tree $\T_0$]{
            \label{fig:maintree}
            \begin{minipage}[c]{0.33\textwidth}
            \centering
                \begin{tikzpicture}[scale=0.92,
                level distance=1.6cm,
                level 1/.style={sibling distance=3cm},
                level 2/.style={sibling distance=1.5cm},
                every node/.style = {draw, line width=0.2cm, thick},
                edge from parent path={(\tikzparentnode.south) -- (\tikzchildnode.north)}]
                \begin{scriptsize}
                \node[chain] {$\{\hat f_{1,k}\}_{k=1}^K$}
                child { node[chain] {$\{\hat f_{2,k}\}_{k=1}^K$}
                    child { node[chain] {$\{\hat f_{4,k}\}_{k=1}^K$} 
                    }
                    child { node[chain] {$\{\hat f_{5,k}\}_{k=1}^K$} 
                    }
                }
                child { node[chain] {$\{\hat f_{3,k}\}_{k=1}^K$}
                    child { node[chain] {$\{\hat f_{6,k}\}_{k=1}^K$} 
                    }
                    child { node[chain] {$\{\hat f_{7,k}\}_{k=1}^K$} 
                    }
                };
                \end{scriptsize}
                \end{tikzpicture}
                \vspace{0.2cm}
            \end{minipage}
        }%
        \hfill
        \subfigure[\small \(\T_1 \in \Pcal(\T_0), f_1 \in \C^\alpha(\X,L)\)]{
            \label{fig:prun1}
            \begin{minipage}{0.3\textwidth}
               \centering \begin{tikzpicture}[scale=0.95,
                level distance=1.2cm,
                level 1/.style={sibling distance=2cm},
                level 2/.style={sibling distance=1cm},
                every node/.style = {draw, thick},
                edge from parent path={(\tikzparentnode.south) -- (\tikzchildnode.north)}]
                \begin{scriptsize}
                \node[chain] { $\cdot$ }
                child { node[chain] {$\hat f_{2,k_2}$}
                    child { node[chain] {\ding{55}} 
                    }
                    child { node[chain] {\ding{55}} 
                    }
                }
                child { node[chain] {$\hat f_{3,k_3}$}
                    child { node[chain] {\ding{55}} 
                    }
                    child { node[chain] {\ding{55}} 
                    }
                };
                \end{scriptsize}
                \end{tikzpicture} \\
                \vspace{-0.7cm} 
               \begin{tikzpicture}[scale=0.5]
    \begin{axis}[
        domain=-6:6, 
        samples=100, 
        axis x line=bottom,
        axis line style={-},
        xlabel=$\X$,
        ymin=0, ymax=1.2, 
        xtick=\empty, 
        axis y line=none,
        legend pos=north east,
        thick
    ]
    \addplot[
        thick,
        domain=-5:5
    ]
    {exp(-0.5*(x+3)^2) + 0.7*exp(-0.5*(x-3)^2)};
    \node at (axis cs:-2,0.6) [anchor=west] {$f_1$};
    \end{axis}
\end{tikzpicture}
            \end{minipage}
        }
        \hfill 
        \subfigure[\small \(\T_2 \in \Pcal(\T_0), f_2 \in \C^\alpha(\X,L)\)]{
            \label{fig:prun2}
            \begin{minipage}[c]{0.3\textwidth}
            \centering
                \begin{tikzpicture}[scale=0.95,
            level distance=1.2cm,
            level 1/.style={sibling distance=2cm},
            level 2/.style={sibling distance=1cm},
            every node/.style = {draw, thick},
            edge from parent path={(\tikzparentnode.south) -- (\tikzchildnode.north)}]
            \begin{scriptsize}
            \node[chain] { $\cdot$ }
            child { node[chain] {$\hat f_{2,k_2}$}
                child { node[chain] {\ding{55}} 
                }
                child { node[chain] {\ding{55}} 
                }
            }
            child { node[chain] { $\cdot$ }
                child { node[chain] {$\hat f_{6,k_6}$} 
                }
                child { node[chain] {$\hat f_{7,k_7}$} 
                }
            };
            \end{scriptsize}
            \end{tikzpicture} \\
                \vspace{-0.7cm} 
                \begin{tikzpicture}[scale=0.5]
                    \begin{axis}[
                        domain=-6:6,
                        samples=100,
                        axis x line=bottom,
                        axis line style={-},
                        xlabel=$\X$,
                        ymin=0, ymax=1.2,
                        xtick=\empty,
                        axis y line=none,
                        thick
                    ]
                    \addplot[
                        thick,
                        domain=-5:5
                    ]
                    {exp(-0.5*(x+3)^2) + 0.5*exp(-2*(x-1.5)^2) + 0.7*exp(-1*(x-4)^2)};
                    \node at (axis cs:-2,0.6) [anchor=west] {$f_2$};
                    \end{axis}
                \end{tikzpicture}
            \end{minipage}
        }
    }
\end{figure}

\paragraph{Complexity.}
Similar to before, even though our core tree \(\T_0\) involves at most \(O(|\N(\T_0)|) = O(\sqrt{T}2^{\depth(\T_0)d}) = O(T^{\frac 3 2})\) predictors after $T$ iterations, our algorithm remains computationally feasible, since at a time \(t\), only a subset of \(\depth(\T_0)\) nodes are active and updated with the \texttt{weight} subroutine. The resulting overall complexity is of order \(\frac{1}{d^2}\sqrt{T}\log_2(T)^2\) per step. 

\paragraph{Second result.}
In our main result (Theorem \ref{theorem:optimal_local_regret}), we prove that Algorithm \ref{alg:Local_Adapt_Algo} achieves a locally adaptive regret with respect to any $\alpha$-Hölder function. Indeed, we show an upper-bound regret that scales with the local regularities of the competitor. Meanwhile, we show that Algorithm \ref{alg:Local_Adapt_Algo} also adapts to the curvature of the losses: its regret performances improve when facing exp-concave losses (i.e., when $y \mapsto e^{-\eta \ell_t(y)}$ are concave for some $\eta>0$), as shown in the second part of Theorem \ref{theorem:optimal_local_regret}. 
Exp-concave losses include the squared, logistic or logarithmic losses. 
Note that for Assumption \ref{assumption:second_order_algo} to hold, the gradients \( \tilde \g_t \) must be bounded by \( G \) in the sup-norm. The Hölder assumption on \( f \) and the boundedness condition on \( \X \) alone are not sufficient. It is also essential that all predictions \( \hat f_{n,k,t}(x_t) \) are bounded, which is achieved through clipping in Algorithm \ref{alg:Local_Adapt_Algo} - see e.g., \citet{gaillard2015chaining, cutkosky2018black}. To simplify the presentation, we state the theorem here only for the case $d=1$ and $\alpha > 1/2$.

\begin{theorem}
    Let  \(\alpha \in (\frac 1 2,1], d=1, T \geq 1\) and $(\T_0, \bar \X, \bar \W)$ be a core regular tree with $\X_{\root(\T_0)} = \X$ and CT $\{\T_{n,k} : (n,k) \in \N(\T_0)\times [K]\}$ satisfying the same assumptions as in Theorem \ref{theorem:minimax_CT_param_free} and whose nodes root are initialized as $\theta_{\root(\T_{n,k}),1} = \gamma_k \in \Gamma$, for all $(n, k) \in \N(\T_0) \times [K]$.
    Then, Algorithm~\ref{alg:Local_Adapt_Algo} with a \emph{\texttt{weight}} subroutine as in Assumption~\ref{assumption:second_order_algo}, achieves the regret upper-bound with respect to any $f \in \C^\alpha(\X,L), L > 0$,
    \vspace{-0.2cm}
    \[
        \textstyle \Reg_T(f) \lesssim \inf_{\T \in \Pcal(\T_0)} \bigg\{ \sqrt{|\L(\T)|T} + |\L(\T)|
         + |\X|^\alpha  \sum_{n \in \L(\T)} L_n(f) 2^{-\alpha(\depth(n)-1)}
        \sqrt{|T_n|} \bigg\},
    \]
where $\lesssim$ is a rough inequality depending on $C_i, i=1,\dots,4, G$ and $L_n(f) \leq L, n \in \L(\T)$, are the local Hölder constants \eqref{eq:deflochold} of \(f\), and \(T_n = \{1 \leq t \leq T : x_t \in \X_n\}\).

Moreover, if \(\ell_1, \dots, \ell_T\) are exp-concave, one has:
    \[\textstyle \Reg_T(f) \lesssim \inf_{\T \in \Pcal(\T_0)} \left\{|\L(\T)| +  |\X|^\alpha \sum_{n \in \L(\T)} L_n(f) 2^{-\alpha(\depth(n)-1)}
        \sqrt{|T_n|}
    \right\}\]
where $\lesssim$ also depends on the exp-concavity constant.
\label{theorem:optimal_local_regret}
\end{theorem}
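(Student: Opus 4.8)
## Proof proposal for Theorem \ref{theorem:optimal_local_regret}

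\textbf{Overall strategy: regret decomposition along a fixed pruning, then optimize.} The plan is to fix an arbitrary pruning $\T \in \Pcal(\T_0)$ and an arbitrary competitor $f \in \C^\alpha(\X, L)$, bound $\Reg_T(f)$ by three terms — an \emph{aggregation} term coming from the sleeping-expert weights over $\N(\T_0)\times[K]$, a \emph{quantization} term coming from discretizing the root offsets to the grid $\Gamma$, and an \emph{estimation/approximation} term coming from the performance of each individual clipped chaining tree $\T_{n,k}$ against $f$ restricted to $\X_n$ — and then take the infimum over $\T$ at the very end. Since $\T$ is fixed, its leaves $\L(\T)$ induce a partition $\{\X_n : n \in \L(\T)\}$ of $\X$, so $\sum_{t=1}^T \ell_t(f(x_t)) = \sum_{n \in \L(\T)} \sum_{t \in T_n} \ell_t(f(x_t))$, and similarly the learner's cumulative loss can be compared node-by-node after the aggregation step is peeled off.

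\textbf{Step 1: peel off the aggregation layer.} First I would use convexity of $\ell_t$ together with the gradient $\tilde\g_t$ revealed in Algorithm \ref{alg:Local_Adapt_Algo} to linearize: $\ell_t(\hat f_t(x_t)) - \ell_t(\hat f_{\T,t}^{(k)}(x_t)) \le \tilde\g_t^\top(\tilde\w_t - e_{\text{something}})$, where the comparison point is the weight vector putting mass on the leaves of $\T$ with a fixed grid index per leaf. Because the non-active nodes $n \notin \N_t$ contribute the learner's own prediction $\hat f_t(x_t)$ to the $\tilde\g_t$-defining expression, the sleeping reduction of \citet{gaillard2014second} applies: the regret against any fixed leaf $(n,k_n)$ only accumulates over $t \in T_n$, and Assumption \ref{assumption:second_order_algo} yields a second-order bound $C_3\sqrt{\log(K|\N(\T_0)|)\sum_{t\in T_n}(\cdot)^2} + C_4 G$. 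Summing over the leaves of $\T$ and using $\sum_{t\in T_n}(\cdot)^2 \le G^2|T_n|$ together with Cauchy--Schwarz over $n \in \L(\T)$ ($\sum_n \sqrt{|T_n|} \le \sqrt{|\L(\T)|\sum_n |T_n|} = \sqrt{|\L(\T)|T}$) produces the $\sqrt{|\L(\T)|T}$ and $|\L(\T)|$ terms in the convex case. In the exp-concave case I would instead invoke the standard fact that a second-order/Bernstein-type aggregation bound against $N$ experts gives $O(\log N)$ regret per leaf under exp-concavity (absorbing the variance terms), which removes the $\sqrt{|\L(\T)|T}$ term and leaves only $|\L(\T)|$ (up to the exp-concavity constant and log factors hidden in $\lesssim$).

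\textbf{Step 2: bound each local chaining tree against the best grid-shifted Hölder fit.} For each leaf $n \in \L(\T)$, the relevant comparator is $f$ restricted to $\X_n$, which is $\alpha$-Hölder on $\X_n$ with constant $L_n(f)$. Choose the grid point $\gamma_{k_n} \in \Gamma$ closest to $f(x_{n}^\star)$ for some reference point (e.g. the center of $\X_n$); the discretization error is at most $\varepsilon = T^{-1/2}$ per round, contributing $\le G\varepsilon|T_n|$ which sums to $O(G\sqrt{T})$ overall — this is the source of the extra $\sqrt{T}$ already present in the $\sqrt{|\L(\T)|T}$ term, so it is harmless. Then apply Theorem \ref{theorem:minimax_CT_param_free} (more precisely its internal estimation bound, since the chaining tree $\T_{n,k_n}$ is rooted at $\X_n$ with diameter $|\X_n| = |\X| 2^{-(\depth(n)-1)}$ and sees only the $|T_n|$ points falling in $\X_n$) to the shifted, clipped chaining tree: its regret against $f|_{\X_n}$ is $O\big(G L_n(f) |\X_n|^\alpha \sqrt{|T_n|}\big) = O\big(G L_n(f) |\X|^\alpha 2^{-\alpha(\depth(n)-1)}\sqrt{|T_n|}\big)$ in the regime $d < 2\alpha$ (which holds here since $d=1$, $\alpha > 1/2$). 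Clipping is harmless because $|f| \le B$, so $[\cdot]_B$ only decreases the loss relative to the unclipped predictor — this is where the clipping step \ref{alg:line_clipping} and the boundedness of $\X$ and predictions (needed for Assumption \ref{assumption:second_order_algo}) are used. Summing over $n \in \L(\T)$ gives the last term; taking $\inf_{\T \in \Pcal(\T_0)}$ completes the proof.

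\textbf{Main obstacle.} The delicate point is the \emph{sleeping-expert reduction with a shared core prediction in the off-nodes}: the gradient $\tilde\g_t$ is defined using $\hat f_t(x_t)$ (not $\tilde f_{n,k,t}(x_t)$) for $n \notin \N_t$, so one must verify carefully that $\tilde\g_t^\top \tilde\w_t$ telescopes correctly into the learner's linearized loss and that the comparison against a pruning — whose leaves live at \emph{different depths} of $\T_0$, hence are active on \emph{different, disjoint} time-subsets $T_n$ — is legitimate. Handling the interaction between the sleeping mechanism (which zeroes out inactive nodes and renormalizes) and the requirement that a valid pruning's leaves partition $\X$ is the crux; once the per-leaf second-order regret is isolated, the rest is the routine chaining/Cauchy--Schwarz bookkeeping of Step 1 and the black-box application of Theorem \ref{theorem:minimax_CT_param_free} in Step 2.
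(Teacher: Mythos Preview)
Your proposal is correct and follows essentially the same route as the paper's proof: the same two-term decomposition $R_1+R_2$ along a fixed pruning, the same sleeping-expert identity $(\tilde\g_t^\top \w_t - \tilde g_{n,k,t}) = (\tilde\g_t^\top \tilde\w_t - \tilde g_{n,k,t})\ind{x_t\in\X_n}$ combined with Assumption~\ref{assumption:second_order_algo} and Cauchy--Schwarz for $R_1$, and the same per-leaf application of the Theorem~\ref{theorem:minimax_CT_param_free} analysis with the grid-shifted root for $R_2$. The only cosmetic differences are that the paper absorbs the $\varepsilon$-discretization directly into the parameter-free root-node bound (yielding $\tfrac{\varepsilon}{2}(C_1\sqrt{|T_n|}+C_2)$ per leaf) rather than via a separate Lipschitz argument, and in the exp-concave case it makes explicit the Hazan lemma plus Young's inequality with $\nu=\mu/\tilde C_3$ to cancel the variance term --- exactly the ``standard fact'' you invoke.
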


We state and prove a complete version of Theorem \ref{theorem:optimal_local_regret} in Appendix \ref{appendix:proof_theorem:optimal_local_regret}, for all \(\alpha \in (0,1], d\geq 1\).
As a remark, Algorithm \ref{alg:Local_Adapt_Algo} is not only adaptive to the local Hölderness of \(f\) (via \(L_n(f)\)), but also to the smoothness rate $\alpha \in (0,1]$. One could extend the previous results in Theorem \ref{theorem:optimal_local_regret} with some local smoothness $(\alpha_n)$ associated to the regularity of the function over the pruned leaves at the price of the interpretability of the bound in specific situations as below. 

\paragraph{Minimax optimality and adaptivity to the loss curvature.} Moreover,  Theorem~\ref{theorem:optimal_local_regret} yields the following corollary, which  demonstrates that our algorithm simultaneously achieves optimal rates for generic convex losses (i.e., similar rates to Theorem~\ref{theorem:minimax_CT_param_free}) and for exp-concave losses, while also adapting locally to the Hölder profile of the competitor - i.e. exhibiting dependencies to constants $L_n(f)$ of the target function $f$. Importantly, our algorithm does not require prior knowledge of the curvature of the losses.
\begin{cor}
    Let \(d = 1\) and \(\alpha \in (\frac{1}{2},1]\). Under assumptions of Theorem \ref{theorem:optimal_local_regret}, Algorithm~\ref{alg:Local_Adapt_Algo} achieves a regret with respect to any $f \in \C^\alpha(\X,L), L > 0$, and any pruning $\T \in \Pcal(\T_0)$,
    \vspace{-0.2cm}
    \begin{equation*}
        \textstyle \Reg_T(f) \lesssim  \inf_{\T \in \Pcal(\T_0)} \left\{  \sum_{n \in \L(\T)} 
             \big(L_n(f) |\X_n|^\alpha \big)^{\frac{1}{2\alpha}} \sqrt{|T_n|} \right\} \,.
    \end{equation*}
Moreover, if $\ell_1, \dots, \ell_T$ are exp-concave, one has:
\vspace{-0.2cm}
    \begin{equation*}
         \textstyle \Reg_T(f) \lesssim  \inf_{\T \in \Pcal(\T_0)} \left\{ \textstyle \sum_{n \in \L(\T)} 
               \big(L_n(f)|\X_n|^\alpha \big)^{\frac{2}{2\alpha + 1}} |T_n|^{\frac{1}{2\alpha + 1}}
        \right\} \,.
    \end{equation*}
\label{cor:regret_flat_pruning_exp_concave}
\end{cor}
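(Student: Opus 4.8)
My plan is to derive the corollary from Theorem~\ref{theorem:optimal_local_regret} by \emph{choosing the pruning in the infimum adaptively}: for an arbitrary candidate pruning $\T\in\Pcal(\T_0)$ I will construct a finer pruning $\T'$ whose Theorem-bound is already of the claimed order, and then take the infimum over $\T$. First I would rewrite the bound of Theorem~\ref{theorem:optimal_local_regret} using the identity $|\X|^\alpha 2^{-\alpha(\depth(n)-1)} = |\X_n|^\alpha$ (each level of a chaining tree halves the diameter, Definition~\ref{def:chaining_tree}), so its chaining term reads $\sum_{n\in\L(\T)}L_n(f)|\X_n|^\alpha\sqrt{|T_n|}$; I would also invoke the complete form of the theorem from Appendix~\ref{appendix:proof_theorem:optimal_local_regret}, in which the aggregation cost is the leafwise sleeping-expert quantity $\sum_{n\in\L(\T)}\sqrt{|T_n|}$ (up to $\mathrm{polylog}$ factors — Cauchy–Schwarz recovers the displayed $\sqrt{|\L(\T)|T}$), this term being absent for exp-concave losses.

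\textbf{The refinement step.} Given $\T$ and a vector $(h_n)_{n\in\L(\T)}$ of nonnegative integers with $h_n\le\depth(\T_0)-\depth(n)$, I would let $\T'\in\Pcal(\T_0)$ be the pruning obtained by attaching below each leaf $n$ of $\T$ the complete binary subtree of $\T_0$ of depth $h_n$. For $d=1$ its leaves are the $2^{h_n}$ descendants $m$ of each $n$ at relative depth $h_n$, satisfying $|\X_m| = |\X_n|2^{-h_n}$, $L_m(f)\le L_n(f)$ (a smaller region has a smaller local Hölder constant, \eqref{eq:deflochold}), and $\sum_{m\prec n}|T_m| = |T_n|$; Cauchy–Schwarz then gives $\sum_{m\prec n}\sqrt{|T_m|}\le 2^{h_n/2}\sqrt{|T_n|}$. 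Plugging these into the complete bound of Theorem~\ref{theorem:optimal_local_regret} yields, for convex losses,
\[
\Reg_T(f)\;\lesssim\;\sum_{n\in\L(\T)}\Big[\sqrt{|T_n|}\,\big(2^{h_n/2}+c_n\,2^{h_n(1/2-\alpha)}\big)+2^{h_n}\Big],\qquad c_n:=L_n(f)|\X_n|^\alpha,
\]
and the same right-hand side without the $2^{h_n/2}\sqrt{|T_n|}$ summand for exp-concave losses. Since $\alpha>1/2$ the exponent $1/2-\alpha$ is negative, so a larger $h_n$ trades a bigger complexity contribution for a smaller chaining contribution.

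\textbf{Leafwise balancing.} As the right-hand side decouples over $n$, I would optimize each $h_n$ separately. In the convex case I would take $2^{h_n}\asymp c_n^{1/\alpha}$ (clipped to the admissible range, and $h_n=0$ when $c_n$ is too small), which equalizes $2^{h_n/2}$ and $c_n 2^{h_n(1/2-\alpha)}$ and gives a per-leaf contribution $\lesssim c_n^{1/(2\alpha)}\sqrt{|T_n|}$, the leftover $2^{h_n}\asymp c_n^{1/\alpha}$ being of lower order whenever $c_n\lesssim|T_n|^\alpha$; in the exp-concave case I would take $2^{h_n}\asymp\big(c_n\sqrt{|T_n|}\big)^{2/(2\alpha+1)}$, equalizing $2^{h_n}$ and $c_n\sqrt{|T_n|}\,2^{h_n(1/2-\alpha)}$, and get a per-leaf contribution $\lesssim\big(c_n\sqrt{|T_n|}\big)^{2/(2\alpha+1)} = c_n^{2/(2\alpha+1)}|T_n|^{1/(2\alpha+1)}$. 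Summing over $n\in\L(\T)$ reproduces the two displayed bounds of the corollary for this fixed $\T$, and taking the infimum over $\T\in\Pcal(\T_0)$ completes the proof.

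\textbf{Main obstacle.} The delicate point is the aggregation cost: the displayed form $\sqrt{|\L(\T)|T}$ is too lossy when the sample is unevenly spread over $\L(\T)$ and does not suffice on its own — one genuinely needs the sharper leafwise bound $\sum_{n\in\L(\T)}\sqrt{|T_n|}$ coming from the sleeping-expert analysis, which is why the corollary must be read off the complete version of Theorem~\ref{theorem:optimal_local_regret}. The rest is bookkeeping: checking that the refinement depths $h_n = O(\log_2 T)$ fit inside $\T_0$ and the embedded chaining trees (true since $\depth(\T_0)=\Theta(\log_2 T)$ under the assumptions), and that the residual $2^{h_n}$ terms, together with the $\mathrm{polylog}$ factors and the $\varepsilon=T^{-1/2}$ grid error hidden in $\lesssim$, are absorbed — which holds in the meaningful regime where $L$, hence every $c_n$, is polynomial in $T$.
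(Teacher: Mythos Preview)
Your proposal is correct and follows essentially the same approach as the paper: refine each leaf $n$ of an arbitrary pruning $\T$ by a complete subtree of depth $h_n$, apply the leafwise (pre--Cauchy--Schwarz) form of the Theorem~\ref{theorem:optimal_local_regret} bound, and balance $h_n$ per leaf exactly as you do --- $2^{h_n}\asymp c_n^{1/\alpha}$ in the convex case and $2^{h_n}\asymp(c_n\sqrt{|T_n|})^{2/(2\alpha+1)}$ in the exp-concave case. You also correctly pinpoint the one subtle step, namely that the aggregation cost must be kept in its sharper sleeping-expert form $\sum_n\sqrt{|T_n|}$ rather than the displayed $\sqrt{|\L(\T)|T}$; the paper does precisely this (``without applying Inequality~\eqref{eq:regret_pruning_convex_loss} in the term depending on $C_3$'').
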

\vspace{-0.2cm}

The proof of Corollary \ref{cor:regret_flat_pruning_exp_concave} is postponed to Appendix \ref{appendix:proof_corollary:flat_pruning}.
In particular, upper-bounding the infimum over all prunings by the root, our regret becomes $O(L^{2/(2\alpha+1)} T^{1/(2\alpha + 1)})$ and $O(L^{1/(2\alpha)} \sqrt{T})$ for the exp-concave and general case respectively. This achieves the same optimal regret to that obtained in \citet{gaillard2015chaining}, for any sequence of exp-concave losses, without the prior-knowledge of the scale-parameter $\gamma$ that they require, and adapting to any regularity while they consider $L,\alpha = 1$. Our algorithm is also nearly minimax in term of the constants \((L, \alpha)\) as shown by \citet{tsybakov2008introduction}, \citet{hazan2007online} or \citet{bach2024learning}. We provide some experimental illustrations of the results from Corollary \ref{cor:regret_flat_pruning_exp_concave} in Appendix \ref{appendix:xp}.

We note that the fast rate in $T$ obtained under exp-concavity is not optimal in $L$. Thus a compromise is made by our algorithm which competes with more complex oracle trees when $L$ is large to improve and obtain the rate $\sqrt L$ by decreasing the rate in $T$. Such trade-off is classical in parametric online learning as bearing resemblances with the comparison between first and second order algorithms, the first ones being optimal in the dimension, the second ones in $T$. Remarkably, our unique algorithm achieves both regret bounds which opens the door to a minimax theory on rates in $L$ and $T$ and not solely on fast rates in $T$.

\paragraph{Adaptivity to local regularities.}
Theorem \ref{theorem:optimal_local_regret} improves the optimal regret bound established in Theorem \ref{theorem:minimax_CT_param_free} by making it adaptive to the local regularities of the Hölder function $f$. To illustrate this better, applying Hölder's inequality entails (see Appendix \ref{appendix:proof_simplified_bound} for details): for any pruning $\T$
\begin{equation}
\textstyle
    \Reg_T(f) \lesssim   \left\{ 
        \begin{array}{ll}
              (|\X|^\alpha  \bar L(f))^{\frac{2}{2\alpha + 1}} T^{\frac{1}{2\alpha + 1}}  & \text{if $\ell_t$ are exp-concave}\,, \\
             (|\X|^\alpha  \bar L(f))^{\frac{1}{2\alpha}} \sqrt{T}\,, & 
        \end{array}
        \right.
        \label{eq:simple_pruning_bound}
\end{equation}
where $\bar L(f) = \big( \frac{1}{|\X|} \sum_{n \in \L(\T)} |\X_n| L_n(f)^{1/\alpha}\big)^\alpha $ is an average of the local Hölder constants $L_n(f)$ weighted by the size of the sets $\X_n$ over $\T$. This result is in the same spirit as that of \citet{kuzborskij2020locally}, that focus on adapting to tree-based local Lipschitz profiles. However, contrary to us, they need to assume the prior knowledge of bounds $(M^{(k)})_{1\leq k\leq \depth(\T_0)}$ such that $M^{(k)} \geq L_n(f)$ for any $n \in \N(\T_0), \depth(n) = k$. Doing so, for any pruning $\T$, when $\alpha = 1$ and ignoring the dependence on $\X$, for the squared loss (which is exp-concave), they prove a bound of order
\[
    O\Big((\bar M(f) T)^{\frac{1}{2}} + \textstyle \sum_{k} (M^{(k)} |T^{(k)}|)^{1/2} \Big) \qquad \text{where} \qquad \bar M(f) = \textstyle \sum_{k=1}^{\depth(\T)} w^{(k)}  M^{(k)}\,,
\]
with $w^{(k)}$ the proportion of leaves at depth $k$ in the pruning; and $T^{(k)}$ the set of rounds in which $x_t$ belongs to a leaf at level $k$. By grouping our leaves $n$ by their respective depths and applying Hölder's inequality, our results recover theirs with two key improvements (see Appendix \ref{appendix:comparison_with_CB} for details): (1) the prior-knowledge of the $M^{(k)}$ is not required in our case and they are replaced with the true local Hölder constants $L_n(f)$ that are smaller; (2) the rate in $T$ is improved from $\sqrt{T}$ to $T^{1/3}$. Note that, similarly to us, the results of \citet{kuzborskij2020locally} hold for general dimensions and convex losses as well.

\section{Conclusion and perspectives}

In this paper we introduced an online learning approach based on chaining trees and proved that this method achieves minimax regret for the $\alpha$-Hölder nonparametric regression problem, $\alpha \in (0,1]$. We designed a general and computationally tractable algorithm that leverages a core structure based on chaining-trees to perform an optimal local approximation of $\alpha$-Hölder functions, where $\alpha \leq 1$. In addition, we showed that our algorithm adapts to the curvature of the loss functions revealed by the environment, while remaining optimal in a minimax sense.  
A limitation of our approach is that chaining trees are minimax-competitive only against $\alpha$-Hölder continuous functions when the smoothness parameter $\alpha \in (0,1]$. However, combinations of trees, such as the forests studied in \citet{arlot2014analysis} and \citet{mourtada2020minimax}, achieve minimax rates for $\alpha \in (1,2]$. Since their framework is based on batch i.i.d. data, an open question remains as to whether combinations of chaining trees can also be minimax-optimal in an adversarial setting against functions with higher regularity.  

As future work, our approach could be extended to incorporate alternative structures beyond chaining trees, such as kernels or shallow networks. In particular, employing other function approximation methods could address a nonparametric regression problem with respect to richer classes of functions. 

\paragraph{Link with boosting in adversarial online regression.} 

Boosting is a well-established strategy in statistical learning \citep{friedman2001greedy, zhang2005boosting}, where a set of weak learners is iteratively combined to construct a strong predictor with improved accuracy. Conceptually, this process refines predictions at each step by correcting errors from previous iterations. Our approach shares similarities with boosting-based methods in that it iteratively and adaptively refines function approximations over time. For instance, the structure of chaining trees that we studied can be seen as an implicit hierarchical refinement process, akin to boosting’s combination of weak learners.  
While boosting has been extensively studied in batch settings, recent research \citep{beygelzimer2015online, hazan2021boosting} has encouraged the adaptation and study of boosting procedures in the context of adversarial nonparametric regression.  

\begin{wrapfigure}{r}{0.35\textwidth}
\begin{center} \vspace{-1cm}
\begin{tikzpicture}[every node/.style={scale=0.8},scale=0.8]
  \tikzstyle{estun}=[->,>=latex,blue,dotted,line width=1pt]
   \node[left] (a) at (0,0) {\(\beta_{1,t}h_{1,t}\)};
   \node[left] (b) at (0,-1.5) {\(\beta_{n-1,t}h_{n-1,t}\)};
   \node[left] (p) at (0,-0.75) {\(\vdots\)};
   \node[left] (d) at (0,-2.5) {\(\beta_{n,{\color{red} t}}h_{n,{\color{red} t}}\)};
   \node[left] (v) at (0,-4.25) {\(\vdots\)};
   \node[left] (e) at (0,-3.5) {\(\beta_{n+1,t}h_{n+1,t}\)};
   \node[left] (f) at (0,-5) {\(\beta_{N,t}h_{N,t}\)};
   \node (m1) at (1.43,-2.55) {};
   \node (m) at (1.42,-2.45) {};
   \node (d') at (3.,-2.5) {\(\beta_{n,{\color{red} t+1}}h_{n,{\color{red} t+1}}\)};
   \draw[estun] (a) to[in=80, out=0,] (1.375-0.2,-2.1);
   \draw[estun] (b) to[in=110, out=0] (1.075-0.2,-2.1);
   \draw[estun] (e) to[in=-110, out=0] (1.075-0.2,-2.9);
   \draw[estun] (f) to[in=-80, out=0] (1.375-0.2,-2.9);
   \draw[blue, line width=1pt] (1.,-2.5) circle (0.4);
   \draw[->,>=latex,line width=1pt] (1.4,-2.5)--(d') node[midway,above]{};
   \draw[line width=1pt] (d)--(0.6,-2.5) node[midway,above]{};
   \begin{scriptsize}
   {\color{blue}
   \draw(1.125-0.2,-1.5)node[below,rotate=35]{$\dots$};
   \draw(1.125-0.2,-3.5)node[above, rotate=-35]{$\dots$};
   \draw(1.,-2.5) node {$g_{n,t}$};}
   \end{scriptsize}
\end{tikzpicture}   
\end{center}
\caption{Boosting at time \(t\).}
\label{fig:diag_boost}
\end{wrapfigure} 

A natural question is whether exposing our algorithms at a meta-state could provide a foundation for analyzing more general weak learners in the context of adversarial online regression. Specifically, instead of relying on a pre-defined hierarchical structure such as chaining trees, one could explore dynamically learning general weak function approximators (e.g., shallow trees, shallow networks) and adaptively aggregating them over time. This perspective is motivated by a more general form of our Algorithm \ref{alg:training_CT}, which we expose here.  

Let $\W$ be a set of real-valued functions $\X \to \R$, and for some $N \ge 1$, define the function space:  
\begin{equation}
    \label{eq:span}
    \operatorname{span}_N(\W) = \big\{\textstyle \sum_{n=1}^N \beta_n h_n, \; h_n \in \W, \beta_n \in \R\big\},
\end{equation}
which forms a linear space of functions based on $N$ elements from $\W$. The goal is to find a sequence of functions $\hat f_t \in \operatorname{span}_N(\W)$, for $t \geq 1$, such that it minimizes the regret $\Reg_T(\F)$ as defined in \eqref{eq:reg}, with $\F = \operatorname{span}_N(\W)$.  

To illustrate this general perspective, one could present our Algorithm \ref{alg:training_CT} as an abstract formulation of a boosting-like procedure for function approximation based on a gradient update. A schematic diagram is provided in Figure \ref{fig:diag_boost}. Specifically, at each step $t \geq 1$, a meta-version of Algorithm \ref{alg:training_CT} would perform Equation \eqref{eq:grad-step_CT} to find a pair $(\beta_{n,t+1},h_{n,t+1}) \in \R \times \W$ approximating a minimum of the following objective function
\begin{equation}
\label{eq:boosting}
 (\beta_n,h_n) \mapsto \ell_t(\hat f_{-n,t}(x_t) + \beta_n h_n(x_t)) \quad \text{where} \quad \hat f_{-n,t}(x_t) = \hat f_t(x_t) - \beta_{n,t}h_{n,t}(x_t)
\end{equation}
using the gradient  $  \left[\nabla_{(\beta_n,h_n)} \ell_t\big(\hat f_{-n,t}(x_t) + \beta_n h_n(x_t)\big)\right]_{(\beta_n,h_n) = (\beta_{n,t}, h_{n,t})}$.

In Section \ref{section:minimax_CT}, we analyzed the special case where $\W$ is specified as $\{h_n : x \mapsto \theta_n \ind{x\in \X_n}, \theta_n \in \R, n \in \N(\T)\}$, with fixed $\beta_n =1$ and $N = |\N(\T)|$, using a parameter-free gradient minimization step. A compelling direction for future research is to analyze whether the meta-algorithm defined by Equation \eqref{eq:boosting} can achieve minimax rates under assumptions on weak learners belonging to a general $\W$. By framing the problem in this way, we believe that it could be analyzed more broadly within an online and adversarial boosting framework - see, for instance, \citet{beygelzimer2015online}.




  

\acks{We acknowledge the financial and material support of the Sorbonne Center for Artificial Intelligence (SCAI) currently funding the Ph.D scholarship of Paul Liautaud. Pierre Gaillard and Paul Liautaud also thank Institut Pauli CNRS, Wien, for their hospitality during their visits.}

\bibliography{biblio}

\newpage
\appendix
\begin{center}
    \Large APPENDIX
\end{center}


\section{Proof of Theorem \ref{theorem:minimax_CT_param_free}}

\label{appendix:proof_theorem:minimax_CT_param_free}

Let $f^* \in \argmin_{f \in \C^\alpha(\X,L)} \sum_{t=1}^T \ell_t(f(x_t))$. We define the function  
\begin{equation}
    \label{eq:fW}
   \hat f^* = \sum_{n \in \L(\T)} f^*(x_n) \ind{\X_n}   \,, 
\end{equation}
where $T_n = \{1\leq t\leq T: x_t \in \X_n\}$, $x_n$ the center of hyper region $\X_n$ (i.e. for any $x \in \X_n, \|x-x_n\|\leq 2^{-1}|\X_n|$). The proof starts with the following regret decomposition 
\begin{equation}
    \label{eq:decomposition}
    \Reg_T(\C^\alpha(\X,L)) = \underbrace{\sum_{t=1}^T \ell_t(\hat f_t(x_t)) - \ell_t(\hat f^*(x_t))}_{R_1} +  \underbrace{\sum_{t=1}^T \ell_t(\hat f^*(x_t)) - \ell_t(f^*(x_t))}_{R_2} \,.
\end{equation}
We will refer to $R_1$ as the estimation error, which consists of the error incurred by sequentially learning the best Chaining-Tree $\hat f^*$. $R_2$ will refer to the approximation error, which involves approximating Hölder functions in $\C^\alpha(\X,L)$ by piecewise-constant functions with $|\L(\T)|$ pieces.

\paragraph{Step 1: Upper-bounding the approximation error $R_2$.}  Note that by definition of the Chaining-Tree~$\T$ (see Definition~\ref{def:chaining_tree}), $\{\X_n, n\in \L(\T)\}$ forms a partition of $\X = \X_{\root(\T)}$ and for any leaf $n \in \L(\T)$
\begin{equation}
    \label{eq:leaf_diameter}
    |\X_n| = \frac{|\X_{\root(\T)}|}{2^{\depth(n)-1}} = \frac{|\X|}{2^{\depth(\T)-1}} \,.
\end{equation}
Then,
\begin{align}
    R_2 &= \sum_{t=1}^T \ell_t(\hat f^*(x_t)) -  \ell_t(f^*(x_t)) &\nonumber\\
     &\leq \sum_{t=1}^T G |\hat f^*(x_t) - f^*(x_t)|  & \leftarrow \text{$\ell_t$ is $G$-Lipschitz} \nonumber \\
     & = G  \sum_{t=1}^T \Big| \sum_{n \in \L(\T)} f^*(x_n)\ind{x_t \in \X_n} - f^*(x_t)\Big| & \leftarrow \text{by~\eqref{eq:fW}} \nonumber \\ 
    & = G \sum_{n \in \L(\T)} \sum_{t \in T_n} |f^*(x_n) - f^*(x_t)| & \leftarrow \text{$\{\X_n, n \in \L(\T)\}$ partitions $\X$} \nonumber \\
    & \leq G \sum_{n \in \L(\T)} \sum_{t \in T_n} L \|x_n - x_t\|_{\infty}^\alpha & \leftarrow f^* \in \C^\alpha(\X,L) \nonumber \\
    & \leq G \sum_{n \in \L(\T)} L 2^{-\alpha}|\X_n|^\alpha |T_n|  & \leftarrow x_n \text{ center of } \X_n \nonumber \\
    & \leq  G L 2^{-\alpha \depth(\T)}|\X|^\alpha T  \,, &  \label{eq:approximation_error}
\end{align}
where the last inequality is by~\eqref{eq:leaf_diameter} and because the leaves form a partition of $\X$, which implies $\sum_{n\in \L(\T)} |T_n| = T$.

\paragraph{Step 2: Upper-bounding the estimation error $R_1$.} We now turn to the bound of the estimation error, that is the regret with respect to best Chaining-Tree $\hat f^*$.

\emph{Step 2.1: Parametrization of $\hat f^*$ in terms of $\theta_n$}. Note that the parametrization of $\hat f^*$ in terms of $\theta_n$ is non-unique. We design below a parametrization such that for any $x \in \X$
\begin{equation}
    \label{eq:fW2}
    \hat f^*(x) =  \sum_{n \in \N(\T)} \theta_n \ind{x \in \X_n} \,,
\end{equation}
and which will allow us to leverage the chaining structure of our Chaining-Tree.
We define, 
\begin{equation}
    \label{eq:thetan}
    \theta_{\root(\T)} = f^*(x_{\root(\T)}) \quad \text{and} \quad \theta_{n} = f^*(x_n) -  f^*(x_{\p(n)}), \quad \text{for } n\neq \root(\T) \,,
\end{equation}
where $T_n = \{1\leq t\leq T: x_t \in \X_n\}$ and $x_n$ stands for the center of subregion $\X_n$ for any $n\in \N(\T)$. \\
Let us show that the above construction \eqref{eq:thetan} indeed satisfies~\eqref{eq:fW2}. To do so, we fix $x \in \X$ and proceed by induction on $m=1,\dots,\depth(\T)$, by proving that
\begin{equation}
    \label{eq:Hn}
    \tag{$\mathcal{H}_m$}
    \sum_{n \in \N(\T)} \theta_n \ind{x \in \X_n} \ind{\depth(n) \leq m} = \sum_{n \in \N(\T)} f^*(x_n) \ind{x \in \X_n} \ind{\depth(n) = m} \,.
\end{equation}
First, note that ($\mathcal{H}_1$) is true by definition of $\theta_{\root(\T)}$. Then, let $m \geq 1$, and assume that ($\mathcal{H}_m$) is satisfied, we have
\begin{align*}
     \sum_{n \in \N(\T)} & \theta_n \ind{x \in \X_n} \ind{\depth(n) \leq m+1}& \\
         &=  \sum_{n \in \N(\T)} \theta_n \ind{x \in \X_n} \ind{\depth(n) \leq m} +  \sum_{n \in \N(\T)} \theta_n \ind{x \in \X_n} \ind{\depth(n) = m+1} &  \\
        & = \sum_{n \in \N(\T)} f^*(x_n) \ind{x \in \X_n} \ind{\depth(n) = m} + \sum_{n \in \N(\T)} \theta_n \ind{x \in \X_n} \ind{\depth(n) = m+1} & \leftarrow \text{by ($\mathcal{H}_m$)}  \\
        & = \sum_{n \in \N(\T)} f^*(x_n)\ind{x \in \X_n} \ind{\depth(n) = m} \\
        & \hspace*{2cm} + \sum_{n \in \N(\T)} ( f^*(x_n) -  f^*(x_{\p(n)}) ) \ind{x \in \X_n} \ind{\depth(n) = m+1}
          & \leftarrow \text{by~\eqref{eq:thetan}} \\
        & = \sum_{n \in \N(\T)} f^*(x_n) \ind{x \in \X_n} \ind{\depth(n) = m+1} \,,
\end{align*}
which concludes the induction. In particular, for $m = \depth(\T)$, ($\mathcal{H}_m$) yields
\[
    \sum_{n \in \N(\T)} \theta_n \ind{x \in \X_n} = \sum_{n \in \L(\T)} f^*(x_n) \ind{x \in \X_n}  = \hat f^*(x)\,,
\]
where the last equality is by definition of $\hat f^*$ in~\eqref{eq:fW}. 

\emph{Step 2.2: Upper-bounding $|\theta_n|$}. The key advantage of the parametrization $\theta_n$ in~\eqref{eq:thetan} is that it leverages the chaining structure of our tree. Each node aims to correct the error made by its parent, and as we show below, this error decreases significantly with the depth $\depth(n)$ of the node $n$. Let $n \in \N(\T)\setminus \{\root(\T)\}$, 
\begin{equation}
    \label{eq:bound_thetan}
    |\theta_n| 
          = |f^*(x_n) -  f^*(x_{\p(n)})| 
          \leq   L \|x_n - x_{\p(n)}\|^\alpha_\infty = L 2^{-\alpha}|\X_n|^\alpha = L|\X|^\alpha 2^{-\alpha \depth(n)}
\end{equation}
where the last equalities are because $\X_n \subset \X_{\p(n)}$ and $|\X_n| = |\X| 2^{-(\depth(n)-1)}$, from Definition~\ref{def:chaining_tree}. Furthermore, by definition of $\C^\alpha(\X,L)$, $|f^*(x)| \leq B$ for any $x \in \X$, hence \[|\theta_{\root(\T)}| = |f^*(x_{\root(\T)})| \leq B\,.\] 

\emph{Step 2.3: Proof of the regret upper bound.} We are now ready to upper bound the estimation error in~\eqref{eq:decomposition}. We have
\begin{align}
    R_1 &= \sum_{t=1}^T \ell_t(\hat f_t(x_t)) - \ell_t(\hat f^*(x_t)) \nonumber\\
        & = \sum_{t=1}^T \textstyle \ell_t\big(\sum_{n \in \N(\T)} \theta_{n,t} \ind{x_t \in \X_n}\big) - \ell_t\big(\sum_{n \in \N(\T)} \theta_n \ind{x_t \in \X_n}\big)  \nonumber \\
        & \leq \sum_{t=1}^T \sum_{n \in \N(\T)} g_{n,t}(\theta_{n,t} - \theta_n)
\end{align}
by convexity of $\ell_t$, where $g_{n,t}$ is the partial subgradient in $\theta_{n,t}$ as defined in Equation~\eqref{eq:gradient_CT}. Now, from Assumption~\ref{assumption:parameter_free} on the \texttt{grad-step} procedure to optimize $\theta_{n,t}$ and with $\theta_{n,1}=0, g_{n,t} \leq G\ind{x_t \in \X_n}$, we further have, with $T_n = \{1 \leq t \leq T : g_{n,t} \neq 0\}$,
\begin{align}
    R_1 & \leq  G \sum_{n \in \N(\T)} |\theta_n| (C_1 \sqrt{|T_n| } +  C_2) \nonumber \\
        & = G \sum_{m=1}^{\depth(\T)} \sum_{n:\depth(n)=m}|\theta_n| (C_1 \sqrt{|T_n| } +  C_2)  & \nonumber \\
        & \leq BG(C_1\sqrt{T} + C_2) +  L G |\X|^\alpha  \sum_{m=2}^{\depth(\T)} \sum_{n:\depth(n)=m}  (C_1 \sqrt{|T_n| } +  C_2) 2^{-\alpha m}    \hspace*{2cm} \leftarrow \text{by~\eqref{eq:bound_thetan}} \label{eq:decomposition_reg_estimation}
\end{align}
Now, because in a \(d\)-regular decision tree, the number of nodes with depth $m$ equals $|\{n: \depth(n) = m\}| = 2^{d(m-1)}$ (recall that the depth of the root is 1), and because $\{\X_n: \depth(n) = m\}$ forms a partition of $\X$, we have $\sum_{n:\depth(n)=m} T_n = T$ and by Cauchy-Schwarz inequality
\[
    \sum_{n:\depth(n)=m} \sqrt{T_n} \leq \sqrt{ 2^{d(m-1)} {\textstyle \sum_{n:\depth(n)=m}} T_n}  = \sqrt{2^{d(m-1)} T} \,,
\]
which substituted into the previous upper bound entails
\begin{align}
    R_1 
        & \leq BG(C_1\sqrt{T} + C_2) + L G |\X|^\alpha  \sum_{m=2}^{\depth(\T)} \Big( C_1  2^{\frac{d(m-1)}{2} - \alpha m} \sqrt{T} +  C_2  2^{d(m-1) - \alpha m}   \Big) \nonumber \\
        & = BG(C_1\sqrt{T} + C_2) + LG|\X|^\alpha \bigg(2^{-\frac{d}{2}}C_1 \sqrt{T} \sum_{m=2}^{\depth(\T)} 2^{m(\frac{d}{2} - \alpha )} + 2^{-d} C_2 \sum_{m=2}^{\depth(\T)} 2^{m (d-\alpha)} \bigg) \,.
        \label{eq:estimation_error}
\end{align}

\paragraph{Step 3: Conclusion and optimization of $\depth(\T)$.} To conclude the proof, we consider three cases according to the sign of $d-2\alpha$:

\emph{$\bullet$ Case 1: if $d < 2\alpha$}. Then
    \[
       2^{-\frac{d}{2}}\sum_{m=2}^{\depth(\T)} 2^{m(\frac{d}{2} - \alpha )} \leq \frac{1}{1-2^{\frac{d}{2}-\alpha}}  \quad \text{ and } \quad   2^{-d}\sum_{m=2}^{\depth(\T)} 2^{m (d-\alpha)}   \leq 2^{-d}\sum_{m=0}^{\depth(\T)} 2^{m\alpha} \leq 2^{-d} \frac{2^{\alpha (\depth(\T)+1)}}{2^\alpha - 1} \stackrel{(2\alpha \geq 1)}{\leq} 2^{\alpha \depth(\T) + 2}  \,,
    \] 
    and~\eqref{eq:estimation_error} yields 
    \[
         R_1 \leq BG(C_1\sqrt{T} + C_2) + LG|\X|^\alpha \bigg( \frac{C_1 \sqrt{T} }{1-2^{\frac{d}{2}-\alpha}} + C_2 2^{\alpha \depth(\T) + 2} \bigg) \,;
    \]
    Therefore, combining with~\eqref{eq:decomposition} and~\eqref{eq:approximation_error}, the regret is upper-bounded as
    \[
        \Reg_T(\C^\alpha(\X,L)) \leq BG(C_1\sqrt{T} + C_2) + LG |\X|^\alpha \Big(  \frac{C_1 \sqrt{T}}{1-2^{\frac{d}{2}-\alpha}} + C_2 2^{\alpha \depth(\T) + 2} + T 2^{-\alpha \depth(\T)}\Big) \,.
    \]
    The choice $\depth(\T) = \frac{1}{d} \log_2 T$ entails
    \begin{equation}
        \label{eq:case1}
        \Reg_T(\C^\alpha(\X,L)) \leq BG(C_1\sqrt{T} + C_2) + LG |\X|^\alpha \Big(  \frac{C_1}{1-2^{\frac{d}{2}-\alpha}} + 4 C_2 + 1 \Big) \sqrt{T} \,.
    \end{equation}

\emph{$\bullet$ Case 2: if $d = 2\alpha$}. Then
    \[
       2^{-\frac{d}{2}}\sum_{m=2}^{\depth(\T)} 2^{m(\frac{d}{2} - \alpha )} \leq \depth(\T) \quad \text{ and } \quad   2^{-d}\sum_{m=2}^{\depth(\T)} 2^{m (d-\alpha)}  = 2^{-d} \sum_{m=2}^{\depth(\T)} 2^{m\alpha} \leq 2^{\alpha \depth(\T) + 2}  \,,
    \] 
    and~\eqref{eq:estimation_error} yields
    \[
        R_1 \leq BG(C_1\sqrt{T} + C_2) + LG|\X|^\alpha \Big( C_1 \sqrt{T} \depth(\T) + C_2 2^{\alpha \depth(\T) + 2} \Big) \,;
    \]
    Therefore, combining with~\eqref{eq:decomposition} and~\eqref{eq:approximation_error}, the regret is upper-bounded as
    \[
        \Reg_T(\C^\alpha(\X,L)) \leq BG(C_1\sqrt{T} + C_2) + 2^\alpha LG |\X|^\alpha \Big(  C_1 \sqrt{T} \depth(\T) + C_2 2^{\alpha \depth(\T) + 2} + T 2^{-\alpha \depth(\T)}\Big) \,.
    \]
    The choice $\depth(\T) = \frac{1}{d} \log_2 T$ entails
    \begin{equation}
        \label{eq:case2}
        \Reg_T(\C^\alpha(\X,L)) \leq BG(C_1\sqrt{T} + C_2) + 2^\alpha LG |\X|^\alpha \Big(  \frac{C_1}{d} \log_2 T + 4 C_2 + 1 \Big) \sqrt{T} \,.
    \end{equation}

\emph{$\bullet$ Case 3: if $d > 2\alpha$}. Then
    \[
       2^{-\frac{d}{2}}\sum_{m=2}^{\depth(\T)} 2^{m(\frac{d}{2} - \alpha )} \leq \frac{2^{(\frac{d}{2}-\alpha)\depth(\T)}}{2^{\frac{d}{2}-\alpha} - 1} \quad \text{ and } \quad   2^{-d}\sum_{m=2}^{\depth(\T)} 2^{m (d-\alpha)}  \leq \frac{2^{(d-\alpha) \depth(\T) } }{2^{d-\alpha} - 1} \leq 2^{(d-\alpha) \depth(\T) + 2 } \,,
    \] 
    where the last inequality is because $(2^{d-\alpha} -1)^{-1} \leq (2^{d/2} -1)^{-1} \leq (\sqrt{2}-1)^{-1} \leq 4$. And~\eqref{eq:estimation_error} yields
    \[
         R_1 \leq BG(C_1\sqrt{T} + C_2) + LG|\X|^\alpha \bigg( C_1 \sqrt{T} \frac{2^{ (\frac{d}{2}-\alpha) \depth(\T)}}{2^{\frac{d}{2}-\alpha}-1} +  C_2 2^{(d-\alpha)\depth(\T) + 2}  \bigg) \,.
    \]
    Therefore, combining with~\eqref{eq:decomposition} and~\eqref{eq:approximation_error}, the regret is upper-bounded as
    \[
        \Reg_T(\C^\alpha(\X,L)) \leq BG(C_1\sqrt{T} + C_2) + LG |\X|^\alpha \bigg( C_1 \sqrt{T} \frac{2^{ (\frac{d}{2}-\alpha) \depth(\T)}}{2^{\frac{d}{2}-\alpha}-1} +  C_2 2^{(d-\alpha)\depth(\T) + 2} + T 2^{-\alpha \depth(\T)}\bigg) \,.
    \]
    The choice $\depth(\T) = \frac{1}{d} \log_2 T$ entails
    \begin{equation}
        \label{eq:case3}
        \Reg_T(\C^\alpha(\X,L)) \leq BG(C_1\sqrt{T} + C_2) + LG |\X|^\alpha \bigg(  \frac{ C_1}{2^{\frac{d}{2}-\alpha}-1}  + 4 C_2   + 1 \bigg) T^{1- \frac{\alpha}{d}} \,.
    \end{equation}

\emph{Conclusion.} Combining the three cases~\eqref{eq:case1},~\eqref{eq:case2}, and~\eqref{eq:case3} concludes the proof of the regret bound, which we summarize below
    \[
      \Reg_T(\C^\alpha(\X,L)) \leq BG(C_1\sqrt{T} + C_2) + GL|\X|^\alpha 
        \begin{cases}             
            \big(  \Phi(\frac{d}{2}-\alpha) C_1 + 4 C_2 + 1 \big) \sqrt{T} & \text{if $d< 2\alpha$} \\ 
            \big(\frac{C_1}{d} \log_2 T + 4 C_2 + 1 \big) \sqrt{T} & \text{if $d=2\alpha$}  \\
            \big( \Phi(\frac{d}{2}-\alpha) C_1 + 4 C_2 + 1 \big) T^{1-\frac{\alpha}{d}} & \text{if $d > 2\alpha$} \,, 
    \end{cases}
    \]
    where $\Phi(u) = |2^{u} - 1|^{-1}$.

\newpage 
\section{Proof of Theorem \ref{theorem:optimal_local_regret}}
\label{appendix:proof_theorem:optimal_local_regret}

We state here the full version of Theorem \ref{theorem:optimal_local_regret} that we prove right after.
\begin{theorem}
\label{theorem:full_version_Locally_Adaptive_Reg}
    Let \(T, d \geq 1\) and $(\T_0, \bar \X, \bar \W)$ be a core regular tree with CT $\{\T_{n,k}, (n,k) \in \N(\T_0)\times [K]\}$ satisfying the same assumptions as in Theorem \ref{theorem:minimax_CT_param_free} and root nodes initialized as $\theta_{\root(\T_{n,k}),1} = \gamma_k \in \Gamma$, for all $(n, k) \in \N(\T_0) \times [K]$.
    Then, Algorithm~\ref{alg:Local_Adapt_Algo} with a \emph{\texttt{weight}} subroutine as in Assumption~\ref{assumption:second_order_algo}, achieves the regret upper-bound with respect to any $f \in \C^\alpha(\X,L), L > 0$,
    \begin{multline*}
        \Reg_T(f) \leq \inf_{\T \in \Pcal(\T_0)} \Bigg\{ \beta_1 \sqrt{T|\L(\T)|} + \beta_2|\L(\T)|
         \\ + G|\X|^\alpha \sum_{n \in \L(\T)} L_n(f) 2^{-\alpha(\depth(n)-1)} \begin{cases}
        \psi_1 \sqrt{|T_n|} & \text{if \(d < 2\alpha\)} \\ \psi_2 \log_2|T_n|\sqrt{|T_n|} & \text{if \(d = 2\alpha\)} \\ \psi_1 |T_n|^{1-\frac{\alpha}{d}} & \text{if \(d > 2\alpha\)},
    \end{cases} \Bigg\},
    \end{multline*}
with $\beta_1 = 2C_3G\sqrt{\log\big(2BT|\N(\T_0)|)}$ and $\beta_2 = G(2^{-1}C_1 + C_2 2^{-1} T^{-\frac{1}{2}} + C_4)$, local Lipschitz constants \(L_n(f) \leq L\) as in \eqref{eq:deflochold}, \(\psi_1 = \Phi(d/2 - \alpha)C_1 + 4C_2 +1, \psi_2 = C_1/d + 4C_2 +1\), and \(\Phi, C_1, C_2\) as in Theorem \ref{theorem:minimax_CT_param_free}.

Moreover, if \(\ell_1, \dots, \ell_T\) are \(\eta\)-exp-concave with some \(\eta > 0\), one has:
\begin{equation*}
        \Reg_T(f) \leq \inf_{\T \in \Pcal(\T_0)} \Bigg\{ \beta_3|\L(\T)| + G|\X|^\alpha \sum_{n \in \L(\T)} L_n(f) 2^{-\alpha(\depth(n)-1)} 
    \begin{cases}
        \psi_1 \sqrt{|T_n|} & \text{if \(d < 2\alpha\)} \\ 
        \psi_2 \log_2|T_n|\sqrt{|T_n|} & \text{if \(d = 2\alpha\)} \\ \psi_1 |T_n|^{1-\frac{\alpha}{d}} & \text{if \(d > 2\alpha\)},
    \end{cases}
    \Bigg\}
\end{equation*}
with $\beta_3 = \frac{C_3^2 \log\big(2 BT|\N(\T_0)|\big)}{2\mu} + C_4G + 2^{-1}G (C_1 + C_2T^{- \frac{1}{2}})$ and \(0 < \mu \leq \min\{1/G,\eta\}\).
\end{theorem}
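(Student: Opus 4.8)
The plan is to fix an arbitrary pruning $\T\in\Pcal(\T_0)$, prove the stated bound for this $\T$, and take $\inf_{\T\in\Pcal(\T_0)}$ only at the very end. For each leaf $n\in\L(\T)$ let $x_n$ be the center of $\X_n$ and let $k_n\in[K]$ be the index whose grid point $\gamma_{k_n}\in\Gamma$ is closest to $f(x_n)$; since $\Gamma$ has spacing $\varepsilon=T^{-1/2}$ and $|f(x_n)|\leq B$, this gives $|\gamma_{k_n}-f(x_n)|\leq\varepsilon/2$. Because the leaves of $\T$ tile $\X$, at each round $t$ there is a unique leaf $n(t)\in\L(\T)$ with $x_t\in\X_{n(t)}$, and by Definition~\ref{def:pruning} the pruning predicts $\hat f_{\T,t}(x_t)=\hat f_{n(t),k_{n(t)},t}(x_t)$. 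Writing $T_n=\{1\leq t\leq T:x_t\in\X_n\}$, the sets $\{T_n\}_{n\in\L(\T)}$ partition $[T]$, so the regret splits as
\[
\Reg_T(f)=\underbrace{\sum_{t=1}^T\big(\ell_t(\hat f_t(x_t))-\ell_t(\hat f_{\T,t}(x_t))\big)}_{A}\;+\;\sum_{n\in\L(\T)}\underbrace{\sum_{t\in T_n}\big(\ell_t(\hat f_{n,k_n,t}(x_t))-\ell_t(f(x_t))\big)}_{B_n}\,,
\]
where $A$ is the cost of the \texttt{weight}/\texttt{sleeping} aggregation against the pruning and $B_n$ is the approximation/estimation error of the chaining tree $\T_{n,k_n}$ sitting at node $n$.

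To bound $A$ I would use the sleeping-experts reduction. With the gradient chosen in Algorithm~\ref{alg:Local_Adapt_Algo}, an asleep pair $(n,k)$ (i.e.\ $n\notin\N_t$) receives the gradient of the learner's own prediction, so its linearized instantaneous regret $r_{n,k,t}:=\tilde g_t^\top\tilde w_t-\tilde g_{n,k,t}$ vanishes, whereas for an awake pair $r_{n,k,t}=\ell_t'(\hat f_t(x_t))\big(\hat f_t(x_t)-\hat f_{n,k,t}(x_t)\big)$; hence $\sum_{t=1}^T r_{n,k,t}=\sum_{t\in T_n}\ell_t'(\hat f_t(x_t))\big(\hat f_t(x_t)-\hat f_{n,k,t}(x_t)\big)$ and $|r_{n,k,t}|\leq 2GB$ since every $\hat f_{n,k,t}$ is clipped to $[-B,B]$. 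Convexity of $\ell_t$ gives $\sum_{t\in T_n}\big(\ell_t(\hat f_t(x_t))-\ell_t(\hat f_{n,k_n,t}(x_t))\big)\leq\sum_t r_{n,k_n,t}$, and Assumption~\ref{assumption:second_order_algo}, with $N:=K|\N(\T_0)|$ and the second-order term bounded by $\sum_t r_{n,k_n,t}^2\leq 4G^2B^2|T_n|$, yields $\sum_t r_{n,k_n,t}\lesssim G\sqrt{\log(N)\,|T_n|}+C_4G$. Since the pruning's leaves tile $[T]$, $A=\sum_{n\in\L(\T)}\sum_{t\in T_n}\big(\ell_t(\hat f_t(x_t))-\ell_t(\hat f_{n,k_n,t}(x_t))\big)$, and Cauchy--Schwarz ($\sum_n\sqrt{|T_n|}\leq\sqrt{|\L(\T)|T}$) together with $K=\lceil2B/\varepsilon\rceil$ produces the $\beta_1\sqrt{|\L(\T)|T}$ term and the $C_4G|\L(\T)|$ part of $\beta_2|\L(\T)|$.

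For each $B_n$ I would first use that $\ell_t$ has its minimizer in $[-B,B]$ and $|f|\leq B$, so clipping can only help: $\ell_t(\hat f_{n,k_n,t}(x_t))=\ell_t\big([\tilde f_{n,k_n,t}(x_t)]_B\big)\leq\ell_t(\tilde f_{n,k_n,t}(x_t))$, whence $B_n\leq\sum_{t\in T_n}\big(\ell_t(\tilde f_{n,k_n,t}(x_t))-\ell_t(f(x_t))\big)$. The right-hand side is exactly the regret of the (unclipped) chaining tree $\T_{n,k_n}$ — rooted at $\X_n$ with $|\X_n|=|\X|\,2^{-(\depth(n)-1)}$, root initialized at $\gamma_{k_n}$, trained by Algorithm~\ref{alg:training_CT} over the $|T_n|$ rounds in $T_n$ — against $f$ restricted to $\X_n$, which by~\eqref{eq:deflochold} is $\alpha$-Hölder with local constant $L_n(f)$. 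Rerunning the proof of Theorem~\ref{theorem:minimax_CT_param_free} on this sub-instance, the only change is that the root's parametrization starts at $\gamma_{k_n}$ rather than $0$, so its contribution to the estimation error is $G|\gamma_{k_n}-f(x_n)|(C_1\sqrt{|T_n|}+C_2)\leq\frac{G}{2}(C_1+C_2T^{-1/2})$ instead of $GB(C_1\sqrt{T}+C_2)$; the remaining terms reproduce $G|\X|^\alpha L_n(f)2^{-\alpha(\depth(n)-1)}$ times the three-case factor $\psi_1\sqrt{|T_n|}$, $\psi_2\log_2|T_n|\sqrt{|T_n|}$, or $\psi_1|T_n|^{1-\alpha/d}$ according to the sign of $d-2\alpha$. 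Adding $A$ to $\sum_{n\in\L(\T)}B_n$, collecting the per-leaf constants into $\beta_2|\L(\T)|$, and taking $\inf_{\T\in\Pcal(\T_0)}$ gives the first claim.

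For the exp-concave case, only the analysis of $A$ changes. Each $\eta$-exp-concave, $G$-Lipschitz loss satisfies $\ell_t(\hat f_t(x_t))-\ell_t(\hat f_{n,k_n,t}(x_t))\leq r_{n,k_n,t}-\frac{\mu}{2}r_{n,k_n,t}^2$ for any $0<\mu\leq\min\{1/G,\eta\}$ (up to the clipping bound on the predictions). Writing $S_n=\sum_t r_{n,k_n,t}^2$ and combining with the bound $\sum_t r_{n,k_n,t}\leq C_3\sqrt{\log(N)\,S_n}+C_4G$ from Assumption~\ref{assumption:second_order_algo}, the per-leaf contribution to $A$ is at most $\sup_{S\geq0}\big\{C_3\sqrt{\log(N)\,S}+C_4G-\frac{\mu}{2}S\big\}=\frac{C_3^2\log(N)}{2\mu}+C_4G$; summing over the $|\L(\T)|$ leaves replaces $\beta_1\sqrt{|\L(\T)|T}+\beta_2|\L(\T)|$ by $\beta_3|\L(\T)|$ while leaving the per-leaf chaining-tree terms intact, which is the second claim. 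I expect the delicate point to be the first half of the $A$ analysis: verifying that handing asleep experts the learner's own gradient kills their instantaneous regret, that the second-order potential restricted to one leaf expert scales like $G^2B^2|T_n|$, and that summing over the leaves of the pruning — which simultaneously tile $\X$ and $[T]$ — yields the $\sqrt{|\L(\T)|T}$ dependence rather than a naive $\sqrt{NT}$.
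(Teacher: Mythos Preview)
Your proposal is correct and follows essentially the same approach as the paper: the same regret decomposition into an aggregation term (your $A$, the paper's $R_1$) and per-leaf chaining-tree terms (your $B_n$, the paper's $R_2$), the same sleeping-experts computation showing that asleep pairs have zero instantaneous linearized regret, the same clipping-then-Theorem~\ref{theorem:minimax_CT_param_free} argument on each leaf with the root contribution reduced to $G\tfrac{\varepsilon}{2}(C_1\sqrt{|T_n|}+C_2)$, and the same exp-concave refinement via the negative second-order term (your $\sup_S$ optimization is exactly the paper's Young's inequality with $\nu=\mu/\tilde C_3$). The only cosmetic difference is that you bound $|r_{n,k,t}|\leq 2GB$ directly from the clipped predictions, whereas the paper invokes $\|\tilde\g_t\|_\infty\leq G$ (the premise of Assumption~\ref{assumption:second_order_algo}) to get $|r_{n,k,t}|\leq 2G$; this affects only constants.
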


\begin{proof}[{\bfseries of Theorem \ref{theorem:full_version_Locally_Adaptive_Reg}}]
Let $L>0, \alpha \in (0,1], f^* \in \C^\alpha(\X,L)$ and $\varepsilon >0$ be the precision of the grid $\Gamma$, $K = \lfloor 2B/\varepsilon \rfloor$ the number of experts in each node in $\N(\T_0)$. Let $\T \in \Pcal(\T_0)$ be some pruned tree from $(\T_0, \bar \X, \bar \W)$ with prediction functions \(\bar \W = \{(\hat f_{n,k})_{k \in [K]}, n \in \N(\T_0)\}\) on subsets \(\bar \X = \{\X_n, n \in \N(\T_0)\}\). We call \(\hat f_{\T}\) the associated prediction function of pruning \(\T\) (see Definition \ref{def:pruning}) such that at any time $t \geq 1$, \[\hat f_{\T,t}(x) = \sum_{n \in \L(\T)} \hat f_{n,k_n,t}(x)\,, \qquad x \in \X,\] 
with $k_n = \argmin_{k \in [K]} |(-B+(k-1)\varepsilon) - f^*(x_{n})|$ the best approximating constant of $f^*(x_{n})$ where $x_{n} \in \X_n$ is the center of the sub-region \(\X_n\), i.e. for any $x \in \X_n, \|x-x_n\|_\infty \leq 2^{-1}|\X_n|$.
We have a decomposition of regret as:
\begin{equation}
\Reg_T(f) = \underbrace{\sumT \ell_t(\hat f_t(x_t)) - \ell_t(\hat f_{\T,t}(x_t))}_{=: R_1} + \underbrace{\sumT \ell_t(\hat f_{\T,t}(x_t)) - \ell_t(f^*(x_t))}_{=: R_2},
\label{eq:decomposition_reg}
\end{equation}
\(R_1\) is the regret related to the estimation error of the core expert tree \(\T_0\) compared to some pruning \(\T\) from it. 
On the other hand, \(R_2\) is related to the error of the pruning tree \(\T\) against some function \(f^*\). 

\paragraph{Step 1: Upper-bounding \(R_2\) as local chaining tree regrets.} Recall that according to Definition \ref{def:pruning}, pruning subsets \(\{\X_n, n \in \L(\T)\}\) form a partition of \(\X = \X_{\mathrm{root}(\T_0)}\). Hence, for any \(x_t \in \X\), prediction from pruning \(\T\) at time $t$ is \(\hat f_{\T,t}(x_t) = \hat f_{n,k_n,t}(x_t)\) with $n \in \L(\T)$ the unique leaf such that \(x_t \in \X_{n}\) at time $t$. Then, \(R_2\) can be written as follows:
\begin{align}
        R_2 &= \sumT \sum_{n \in \L(\T)} (\ell_t(\hat f_{\T,t}(x_t)) - \ell_t(f^*(x_t)))\ind{x_t \in \X_n} \nonumber \\
        & = \sum_{n \in \L(\T)} \sum_{t\in T_n}  \ell_t(\hat f_{n,k_n,t}(x_t)) - \ell_t(f^*(x_t)) \nonumber \\
        & \leq \sum_{n \in \L(\T)} \sum_{t\in T_n}  \ell_t(\tilde  f_{n,k_n,t}(x_t)) - \ell_t(f^*(x_t)) 
        , \label{eq:node_approximation}
\end{align}
    where we set \(T_n = \{1 \le t \le T : x_t \in \X_n\}, n \in \L(\T)\) and \eqref{eq:node_approximation} is because $\hat f_{n,k_n,t} = [\tilde f_{n,k_n,t}]_B \leq \tilde f_{n,k_n,t}$ and $\ell_t$ is convex and has minimum in $[-B,B]$.

The decomposition in \eqref{eq:node_approximation} represents a sum of \emph{local} error approximations of the function \(f^*\) over the partition \(\{\X_n, n \in \L(\T)\}\), using predictors \(\tilde {f}_{n,k_n}\) located at the leaves of the pruned tree \(\T\). Recall that for every \(n \in \N(\T_0)\), \(\tilde {f}_{n,k_n}\) is a prediction function associated with a CT $\T_{n,k_n}$, where the root node starts from \(\theta_{\root(\T_{n,k_n}),1} = -B + (k_n - 1)\varepsilon \in \Gamma\) on \(\X_n\). 
In proof of Theorem \ref{theorem:minimax_CT_param_free} (Appendix \ref{appendix:proof_theorem:minimax_CT_param_free}) we study a regret bound \eqref{eq:decomposition} decomposed into two terms: estimation and approximation. In particular, we showed that any CT adapts to any regularity \((L,\alpha) \in \R_+ \times (0,1]\) of \(f^*\). Thus, the approximation error of CT \(\tilde {f}_{n,k_n}\) with respect to \(f^*\) remains similar to that in \eqref{eq:approximation_error}, but now with regard to an Hölder function with a constant \(L_n(f^*) \geq 0\) over \(\X_n\). Specifically, from \eqref{eq:node_approximation}, we get: \begin{multline} R_2 \leq  \sum_{n \in \L(\T)} \left[ \underbrace{G \sum_{m = 1}^{\depth(\T_{n,k_n})} \sum_{n' : \depth(n')=m} |\theta_{n'}-\theta_{n',1}|(C_1\sqrt{|T_{n'}|} + C_2)}_{\text{estimation error as in \eqref{eq:decomposition_reg_estimation}}} \right. \\ \left. + \underbrace{G L_n(f^*) |\X_n|^\alpha |T_n| 2^{-\alpha (\depth(\T_{n,k_n}))}}_{\text{approximation error \eqref{eq:approximation_error} over $\X_n$}}\right] , \label{eq:pre_opt} \end{multline} 
with \(C_1,C_2\) as in Assumption \ref{assumption:parameter_free} and where we set in \eqref{eq:thetan}, \[\theta_{\root(\T_{n,k_n})} = f^* (x_{\root(\T_{n,k_n})}) \quad  \text{and} \quad \theta_{n'} = f^*(x_{n'}) - f^*(x_{\p(n')}), \quad n' \in \N(\T_{n,k_n})\setminus \{\root(\T_{n,k_n})\}.\]
In particular, we have for $n'=\root(\T_{n,k_n})$, \begin{equation}
\label{eq:theta_bounding}
|\theta_{n'}-\theta_{n',1}| = \left| f^*(x_{\root(\T_{n,k_n})}) - (-B + (k_n-1))\varepsilon \right| \leq \frac{\varepsilon}{2},
\end{equation}
by definition of $k_n$ and since $\Gamma = \{-B+(k-1)\varepsilon\}_{k \in [K]}$ is an $\varepsilon$-discretization of the $y$-axis. Moreover, if $n' \in \N(\T_{n,k_n}), \depth(n') \geq 2$, one has $\theta_{n',1} = 0$ and \begin{equation}
\label{eq:theta_bounding_2}
     |\theta_{n'} - \theta_{n',1}| = |\theta_{n'} | \leq L_n(f^*)|\X_n|^\alpha 2^{-\alpha \depth(n')},
\end{equation} according to \eqref{eq:bound_thetan} with $f^* \in \C^\alpha(\X_n,L_n)$.

Then, following the same optimization steps as for Theorem \ref{theorem:minimax_CT_param_free}, in each $\depth(\T_{n,k_n}), n \in \L(\T)$ of \eqref{eq:pre_opt}, we  get: \begin{multline*}
    R_2 \leq  G \sum_{n \in \L(\T)} \frac{\varepsilon}{2} (C_1 \sqrt{|T_{n}|} + C_2)  \\ 
    + G|\X|^\alpha \sum_{n \in \L(\T)} L_n(f^*)2^{-\alpha(\depth(n) - 1)} \begin{cases} \psi_1 \sqrt{|T_n|} & \text{if \(d < 2\alpha\)} \\ \psi_2 \log_2|T_n|\sqrt{|T_n|} & \text{if \(d = 2\alpha\)} \\ \psi_1 |T_n|^{1-\frac{\alpha}{d}} & \text{if \(d > 2\alpha\)}\end{cases}
\end{multline*}
with \(\psi_1 = \Phi(d/2 - \alpha)C_1 + 4C_2 + 1, \psi_2 = C_1/d + 4C_2 +1\), and \(\Phi\) defined in Theorem \ref{theorem:minimax_CT_param_free}.

Cauchy-Schwarz inequality gives \[\sum_{n \in \L(\T)} (C_1\sqrt{|T_n|} + C_2) \leq  C_1\sqrt{|\L(\T)|T} + C_2|\L(\T)|\]

Finally,
 \begin{multline}
     R_2 \leq \frac{\varepsilon}{2} G  \left(C_1\sqrt{|\L(\T)|T} + C_2 |\L(\T)|\right) \\
     \qquad + G|\X|^\alpha \sum_{n \in \L(\T)} L_n(f^*)2^{-\alpha(\depth(n) - 1)} \begin{cases} \psi_1 \sqrt{|T_n|} & \text{if \(d < 2\alpha\)} \\ \psi_2 \log_2|T_n|\sqrt{|T_n|} & \text{if \(d = 2\alpha\)} \\ \psi_1 |T_n|^{1-\frac{\alpha}{d}} & \text{if \(d > 2\alpha\)}\end{cases} 
     \label{eq:regret_approx_pruning}
\end{multline}

\paragraph{Step 2: Upper-bounding the pruning estimation error \(R_1\).}
We aim at bounding the estimation error \(R_1\) due to the error incurred by sequentially learning the best pruned tree prediction and the best root node in $\Gamma$ inside each pruned leaves. Note that at each time \(t\), only a subset of nodes of \(\T_0\) are active and output predictions: for any time \(t \geq 1\), let us denote \(\N_t \subset \N(\T_0)\) the set of active nodes (i.e. making a prediction) at time $t$. Remark that \begin{align} \hat f_t(x_t) &= \sum_{n \in \N(\T_0)} \sum_{k=1}^K w_{n,k,t}\hat f_{n,k,t}(x_t) \label{eq:deffhat}\\ &=\sum_{n \in \N_t} \sum_{k=1}^K \tilde w_{n,k,t}\hat f_{n',k',t}(x_t) + \sum_{n \not \in \N_t} \sum_{k=1}^K  \tilde w_{n,k,t} \hat f_t(x_t),
\label{eq:trick_prediction}
\end{align}
by definition of \(\hat f_t\) and the so called trick of prediction with sleeping experts, e.g. in \cite{gaillard2014second}. Recall that \(\tilde \g_t = \nabla_{\tilde \w_t}\ell_t\big(\textstyle\sum_{n \in \N_t} \sum_{k=1}^K \tilde w_{n,k,t} \hat f_{n,k,t}(x_t) + \sum_{n \not \in \N_t} \sum_{k=1}^K \tilde w_{n,k,t} \hat f_t(x_t)\big) \in \R^{|\N(\T_0)|\times K}\), for all \(t \geq 1\). Then, for all \(n \in \N(\T_0), k \in [K]\), \begin{equation}
\tilde g_{n,k,t} = \begin{cases}
\ell_t'(\hat f_t(x_t))\hat f_{n,k,t}(x_t) & \text{if } n \in \N_t, \\
\ell_t'(\hat f_t(x_t))\hat f_t(x_t) & \text{if } n \not \in \N_t.
\end{cases}
\label{eq:grad_cases}
\end{equation}
For any \(t \geq 1, n \in \L(\T)\) and $k \in [K]$, one has:
\begin{align}
    \tilde \g_t^\top \w_t - \tilde g_{n,k,t} &= \sum_{n' \in \N(\T_0)}  \sum_{k'=1}^K w_{n',k',t}\tilde g_{n',k',t} - \tilde g_{n,k,t} & \notag \\
    &= \ell_t'(\hat f_t(x_t)) \underbrace{\sum_{n' \in \N_t} \sum_{k'=1}^K w_{n',k',t} \hat f_{n',k',t}(x_t)}_{= \hat f_t(x_t)} - \tilde g_{n,k,t} & \notag \\
     &= \ell_t'(\hat f_t(x_t)) \Big(\sum_{n' \in \N_t} \sum_{k' =1}^K \tilde w_{n',k',t}\hat f_{n',k',t}(x_t) + \sum_{n' \not \in \N_t} \sum_{k'=1}^K \tilde w_{n',k',t} \hat f_t(x_t)\Big)  - \tilde g_{n,k,t} & \notag \\
    &= \ell_t'(\hat f_t(x_t)) \notag \\
    & \quad \times \begin{cases}
         (\hat f_t(x_t) - \hat f_t(x_t))  \text{ if } n \not \in \N_t\,, \\
        \big(\sum_{n' \in \N_t} \sum_{k' =1}^K \tilde w_{n',k',t}\hat f_{n',k',t}(x_t) + \sum_{n' \not \in \N_t} \sum_{k'=1}^K \tilde w_{n',k',t} \hat f_t(x_t) - \hat f_{n,k,t}(x_t)\big) \text{ else}\,,
    \end{cases} & \notag \\
    &= \begin{cases}
         0 & \text{if } n \not \in \N_t \,,\\
        \tilde \g_t^\top \tilde \w_t - \tilde g_{n,k,t} & \text{else}\,,
        \end{cases} &\notag \\
    &= (\tilde\g_t^\top \tilde \w_t - \tilde g_{n,k,t})\ind{x_t \in \X_n}\,, & \label{eq:equality_sleeping}
\end{align} 
where the second equality follows from \eqref{eq:deffhat}, the third from \eqref{eq:trick_prediction}, and the fourth from \eqref{eq:grad_cases}.
Finally, we obtain
\begin{align}
        (\ell_t(\hat f_t(x_t)) - \ell_t(\hat f_{n,k,t}(x_t)))\ind{x_t \in \X_n} 
        &\le  \ell_t'(\hat f_t(x_t))
         (\hat f_t(x_t) - \hat f_{n,k,t}(x_t))\ind{x_t \in \X_n} & \leftarrow \text{by convexity of } \ell_t \notag \\
        & = (\tilde \g_t^\top \tilde \w_t - \tilde g_{n,k,t})\ind{x_t \in \X_n}  & \notag \\
        &= \tilde \g_t^\top \w_t - \tilde g_{n,k,t} & \leftarrow \text{by } \eqref{eq:equality_sleeping}, 
        \label{eq:convex}
\end{align}
and setting \(T_n = \{1 \leq t \leq T : x_t \in \X_n\}, n \in \L(\T)\):
\begin{align}
    R_1 &= \sumT \sum_{n \in \L(\T)} (\ell_t(\hat f_t(x_t)) - \ell_t(\hat f_{n,k_n,t}(x_t))\ind{x_t \in \X_n} & \leftarrow \{\X_n, n \in \L(\T)\} \text{ partition of } \X \notag \\
    &\leq \sum_{n \in \L(\T)} \sumT (\tilde \g_t ^\top \w_t - \tilde g_{n,k_n,t}) & \leftarrow \text{by } \eqref{eq:convex} \notag \\
    &\leq \sum_{n \in \L(\T)}\Big( C_3 \sqrt{\log\big(K|\N(\T_0)|\big)} \sqrt{\sumT \big(\tilde \g_t^\top \w_t - \tilde g_{n,k_n,t})^2} + C_4G \Big)& \leftarrow \text{by Assumption } \ref{assumption:second_order_algo} \notag \\
    &= C_4G|\L(\T)| + C_3 \sqrt{\log\big(K|\N(\T_0)|\big)} \sum_{n \in \L(\T)} \sqrt{\sum_{t \in T_n} \big(\tilde \g_t^\top \w_t - \tilde g_{n,k_n,t})^2}, & \label{eq:bound_R1}
\end{align}
where last equality holds because for any \(n \in \L(\T), \tilde \g_t^\top \w_t - \tilde g_{n,k_n,t} = 0\) if \(x_t \not \in \X_n\).

\newpage 
\emph{$\bullet$ Case 1: \((\ell_t)_{1 \leq t \leq T}\) convex.} 

Since \(\|\tilde \g_t\|_\infty \leq G, \|\w_t\|_\infty \leq 1, t \in [T]\) by Assumption \ref{assumption:second_order_algo} and using Cauchy-Schwartz inequality we get from Equation \eqref{eq:bound_R1}:
\begin{align}
    R_1 &\leq C_4G|\L(\T)| + 2C_3 \sqrt{\log\big(K|\N(\T_0)|\big)} G\sum_{n \in \L(\T)} \sqrt{|T_n|} \notag\\
    &\leq C_4G|\L(\T)| + 2C_3G \sqrt{\log\big(K|\N(\T_0)|\big)|\L(\T)| \sum_{n \in \L(\T)} |T_n|} \notag \\
    &=C_4G|\L(\T)| + 2C_3G \sqrt{\log\big(K|\N(\T_0)|\big)|\L(\T)| T}.
    \label{eq:regret_pruning_convex_loss}
\end{align} 

In case of convex losses, we finally have by \eqref{eq:decomposition_reg}, \eqref{eq:regret_approx_pruning} and \eqref{eq:regret_pruning_convex_loss} :
\begin{multline*} \Reg_T(f) \leq 2C_3G\sqrt{\log\big(K|\N(\T_0)|\big)|\L(\T)|T} + \left(C_2 \frac{\varepsilon}{2} +C_4\right) G|\L(\T)| + \frac{\varepsilon}{2} G  C_1\sqrt{|\L(\T)|T}  \\  + G|\X|^\alpha \sum_{n \in \L(\T)} L_n(f) 2^{-\alpha(\depth(n)-1)} \begin{cases}
        \psi_1 \sqrt{|T_n|} & \text{if \(d < 2\alpha\)}\,, \\ \psi_2 \log_2|T_n|\sqrt{|T_n|} & \text{if \(d = 2\alpha\)}\,, \\ \psi_1 |T_n|^{1-\frac{\alpha}{d}} & \text{if \(d > 2\alpha\)}\,,
\end{cases}
\end{multline*}
with \(\psi_1, \psi_2\) defined in \eqref{eq:regret_approx_pruning}.
Taking $\varepsilon = T^{-\frac{1}{2}}, K = \lfloor 2BT^{\frac{1}{2}} \rfloor \leq 2BT$, we get:
\begin{multline*} \Reg_T(f) \leq 2C_3G\sqrt{\log\big(2BT|\N(\T_0)|)}\sqrt{\L(\T)|T} + (2^{-1}C_1 + C_2 2^{-1} T^{-\frac{1}{2}} + C_4) G|\L(\T)| \\ + G|\X|^\alpha \sum_{n \in \L(\T)} L_n(f) 2^{-\alpha(\depth(n)-1)} \begin{cases}
        \psi_1 \sqrt{|T_n|} & \text{if \(d < 2\alpha\)}\,, \\ \psi_2 \log_2|T_n|\sqrt{|T_n|} & \text{if \(d = 2\alpha\)}\,, \\ \psi_1 |T_n|^{1-\frac{\alpha}{d}} & \text{if \(d > 2\alpha\)}\,,
\end{cases}
\end{multline*}

Since this inequality holds for all pruning \(\T \in \Pcal(\T_0)\), one can take the infimum over all pruning in \(\Pcal(\T_0)\) to get the desired upper-bound:
\begin{multline*}
    \Reg_T(f) \leq \inf_{\T \in \Pcal(\T_0)} \Bigg\{\beta_1\sqrt{\L(\T)|T} + \beta_2 |\L(\T)|  \\ + G|\X|^\alpha \sum_{n \in \L(\T)} L_n(f) 2^{-\alpha(\depth(n)-1)} \begin{cases}
        \psi_1 \sqrt{|T_n|} & \text{if \(d < 2\alpha\)}\,, \\ \psi_2 \log_2|T_n|\sqrt{|T_n|} & \text{if \(d = 2\alpha\)}\,, \\ \psi_1 |T_n|^{1-\frac{\alpha}{d}} & \text{if \(d > 2\alpha\)}\,,
\end{cases}\Bigg\},
\end{multline*}
with $\beta_1 = 2C_3G\sqrt{\log\big(2BT|\N(\T_0)|)}$ and $\beta_2 = G(2^{-1}C_1 + C_2 2^{-1} T^{-\frac{1}{2}} + C_4)$.

\newpage 

\emph{$\bullet$ Case 2: \((\ell_t)_{1 \leq t \leq T}\) \(\eta\)-exp-concave.} 

If the sequence of loss functions \((\ell_t)\) is \(\eta\)-exp-concave for some \(\eta >0\), then thanks to a Lemma in \cite{hazan2016introduction} we have for any \(0 < \mu \leq \frac{1}{2}\min \{\frac{1}{G}, \eta\}\) and all \(t \geq 1, n \in \L(\T), k \in [K]\):
\begin{align}
(\ell_t(\hat f_t(x_t)) - \ell_t(\hat f_{n,k,t}(x_t)))\ind{x_t \in \X_n} &\leq \big(\tilde \g_t^\top \tilde \w_t - \tilde g_{n,k,t} - \frac{\mu}{2} \big(\tilde \g_t^\top  \tilde \w_t - \tilde g_{n,k,t}\big)^2\big)\ind{x_t \in \X_n} &\notag \\
&=  \tilde \g_t^\top \w_t - \tilde g_{n,k,t} - \frac{\mu}{2} \big(\tilde \g_t^\top \w_t - \tilde g_{n,k,t}\big)^2 & \leftarrow \text{by } \eqref{eq:equality_sleeping}\,. \label{eq:exp_concave}
\end{align}
Summing \eqref{eq:exp_concave} over $t \in [T]$ and \(n \in \L(\T)\), we get:
\begin{align}
    R_1 &\leq  \sum_{n \in \L(\T)} \sum_{t \in T_n} \tilde \g_t ^\top \tilde \w_t - \tilde g_{n,k,t} - \frac{\mu}{2} \sum_{n \in \N(\Pcal)} \sum_{t \in T_n} \big(\tilde \g_t^\top  \w_t - \tilde g_{n,k,t}\big)^2 \notag \\
    &\leq C_4G |\L(\T)| + \tilde C_3 \sum_{n \in \L(\T)} \sqrt{\sum_{t \in T_n} \big(\tilde \g_t^\top \w_t - \tilde g_{n,k,t})^2} - \frac{\mu}{2} \sum_{n \in \L(\T)} \sum_{t \in T_n} \big(\tilde \g_t^\top  \w_t - \tilde g_{n,k,t}\big)^2 & \leftarrow \text{by } \eqref{eq:bound_R1}\,, \label{eq:before_young}
\end{align}
where we set $\tilde C_3 = C_3 \sqrt{\log\big(K|\N(\T_0)|\big)}$. 
Young's inequality gives, for any \(\nu >0\), the following upper-bound:
\begin{equation}
    \sqrt{\sum_{t\in T_n} \big(\tilde \g_t^\top \w_t - \tilde g_{n,k,t})^2} \leq \frac{1}{2\nu} + \frac{\nu}{2} \sum_{t \in T_n} \big(\tilde \g_t^\top \w_t - \tilde g_{n,k,t})^2 \,.
    \label{eq:young}
\end{equation}
Finally, plugging \eqref{eq:young} with \(\nu = \mu/\tilde C_3 > 0\) in \eqref{eq:before_young}, we get
\begin{align}
    R_1 &\leq C_4G |\L(\T)| + \tilde C_3 \sum_{n \in \L(\T)} \left(\frac{\tilde C_3}{2\mu} + \frac{\mu}{2\tilde C_3}\sum_{t \in T_n} \big(\tilde \g_t^\top \w_t - \tilde g_{n,k,t})^2\right) - \frac{\mu}{2} \sum_{n \in \L(T)} \sum_{t \in T_n} \big(\tilde \g_t^\top  \w_t - \tilde g_{n,k,t}\big)^2 \notag \\
    &= \left(\frac{C_3^2 \log\big(K|\N(\T_0)|\big)}{2\mu} + C_4G\right)|\L(\T)|.
    \label{eq:regret_pruning_exp_concave_loss}
\end{align}

To conclude, if \((\ell_t)\) are \(\eta\)-exp-concave, one has via \eqref{eq:decomposition_reg}, \eqref{eq:regret_approx_pruning} and \eqref{eq:regret_pruning_exp_concave_loss}
\begin{equation*} \Reg_T(f) \leq \beta_3|\L(\T)| + G|\X|^\alpha \sum_{n \in \L(\T)} L_n(f) 2^{-\alpha(\depth(n)-1)} \begin{cases}
        \psi_1 \sqrt{|T_n|} & \text{if \(d < 2\alpha\)}\,, \\ \psi_2 \log_2|T_n|\sqrt{|T_n|} & \text{if \(d = 2\alpha\)}\,, \\ \psi_1 |T_n|^{1-\frac{\alpha}{d}} & \text{if \(d > 2\alpha\)}\,,
\end{cases}
\end{equation*}
with $\beta_3 = \frac{C_3^2 \log\big(2 BT|\N(\T_0)|\big)}{2\mu} + C_4G + 2^{-1}G (C_1 + C_2T^{- \frac{1}{2}})$, \(0<\mu<\frac{1}{2}\min\{\frac{1}{G}, \eta\}\) and \(\psi_1, \psi_2\) defined in \eqref{eq:regret_approx_pruning}.
Again, taking infimum over \(\T \in \Pcal(\T_0)\) gives the result.

\paragraph{Worst case regret bound} Note that since we assume that $\|f\|_\infty \leq B$, and that all local predictors $\hat f_{n,k}, n \in \N(\T_0), k \in [K]$ in Algorithm \ref{alg:Local_Adapt_Algo} are clipped in $[-B,B]$, we first have for any $x \in \X$, \[ |\hat f_t(x)| = \sum_{n\in \N(\T_0)} \sum_{k=1}^K w_{n,k,t}|\hat f_{n,k,t}(x)| \leq B \sum_{n\in \N(\T_0)} \sum_{k=1}^K w_{n,k,t} = B.\]
 
Thus,
 \begin{align}
     \Reg_T(f) &= \sumT \ell_t(\hat f_t(x_t)) - \ell_t(f^*(x_t)) & \notag\\
     &\leq \sumT G |\hat f_{t}(x_t) - f^*(x_t)| & \leftarrow \text{$\ell_t$ is $G$-Lipschitz} \notag \\
     &\leq G \sumT (|\hat f_{t}(x_t)| + |f^*(x_t)|) &\notag \\
     &= 2BGT & \label{eq:worst_case}
 \end{align}
 
\end{proof}

\newpage 
\section{Proof of Corollary \ref{cor:regret_flat_pruning_exp_concave}}
\label{appendix:proof_corollary:flat_pruning}
We state here a complete version of Corollary \ref{cor:regret_flat_pruning_exp_concave}.

\begin{cor}
\label{cor:complete_version_regret_flat_pruning_exp_concave}
    Let $\alpha \in (0,1], 1 \leq d \leq 2\alpha$. Under the same assumptions as in Theorem \ref{theorem:optimal_local_regret}, Algorithm~\ref{alg:Local_Adapt_Algo} achieves a regret with respect to any $f \in \C^\alpha(\X,L), L > 0$:
    \[
         \Reg_T(f) \lesssim
        \inf_{\T \in\Pcal(\T_0)} \left\{ \sum_{n \in \L(\T)}
            \min \left(1 + L_n(f)|\X_n|^\alpha, \Big(L_n(f) |\X_n|^\alpha \Big)^{\frac{1}{2\alpha}} \right) \sqrt{T_n} \right\} \,,
    \]
    where $\lesssim$ is a rough inequality that depends on $C_i$, $i=1,\ldots,4$ but is independent of $L,X,T$.
    Moreover, if $(\ell_t)$ are exp-concave:
    \[
         \Reg_T(f) \lesssim
        \inf_{\T \in\Pcal(\T_0)} \left\{ \sum_{n \in \L(\T)}
             \min \left( L_n(f)|\X_n|^\alpha \sqrt{|T_n|}, \big(L_n(f)|\X_n|^\alpha \big)^{\frac{2}{2\alpha + 1}} |T_n|^{\frac{1}{2\alpha + 1}}  \right) \right\} \,,
    \]
where $\lesssim$ also depends on the exp-concavity constant.
\end{cor}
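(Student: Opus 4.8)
The plan is to bootstrap the full version of Theorem~\ref{theorem:optimal_local_regret} (Theorem~\ref{theorem:full_version_Locally_Adaptive_Reg}), but to re-run its proof only up to the \emph{finer} intermediate bound obtained before the last Cauchy--Schwarz step, namely $R_1 \lesssim |\L(\T)| + \sum_{n\in\L(\T)}\sqrt{|T_n|}$ in the convex case (from~\eqref{eq:bound_R1}, using $\big(\tilde\g_t^\top\w_t - \tilde g_{n,k_n,t}\big)^2 \le 4G^2$) and $R_1 \lesssim |\L(\T)|$ in the exp-concave case (from~\eqref{eq:regret_pruning_exp_concave_loss}). Adding the approximation term~\eqref{eq:regret_approx_pruning}, using $|\X|^\alpha 2^{-\alpha(\depth(n)-1)} = |\X_n|^\alpha$ (Definition~\ref{def:chaining_tree}), discarding leaves with $T_n = \emptyset$ so that $|\L(\T)| \le \sum_n \sqrt{|T_n|}$, and restricting to $d < 2\alpha$ (the case $d=2\alpha$ only inserts a $\log_2|T_n|$ factor, absorbed in $\lesssim$), I would get, for \emph{every} pruning $\T\in\Pcal(\T_0)$,
\[
\Reg_T(f) \;\lesssim\; \sum_{n\in\L(\T)} \big(1 + L_n(f)|\X_n|^\alpha\big)\sqrt{|T_n|}
\quad\text{(convex)},\qquad
\Reg_T(f) \;\lesssim\; |\L(\T)| + \sum_{n\in\L(\T)} L_n(f)|\X_n|^\alpha\sqrt{|T_n|}
\quad\text{(exp-concave)}.
\]
Evaluated at the pruning $\T$ appearing in the statement, these already yield the \emph{first} branch of each minimum (in the exp-concave case up to the benign additive $|\L(\T)|$).

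For the \emph{second} branch I would refine. Fix the pruning $\T$ realising the infimum on the right-hand side of the Corollary, and for each leaf $n\in\L(\T)$ replace it by the complete subtree of $\T_0$ rooted at $n$ of some depth $j_n\ge 0$ (admissible: $j_n \le \depth(\T_0)-\depth(n)$), leaving the remaining leaves untouched; this produces a refined pruning $\T'\in\Pcal(\T_0)$. The crucial point is that the two displayed bounds, unlike the collapsed $\sqrt{T|\L(\T')|}$ form, decompose as an independent sum over $\L(\T)$. For the descendants $n'\preceq n$ at depth $\depth(n)+j_n$ one has $|\X_{n'}| = |\X_n|2^{-j_n}$, $L_{n'}(f)\le L_n(f)$ (the local Hölder constant can only shrink on a subregion), $\sum_{n'\preceq n}|T_{n'}| = |T_n|$, and at most $2^{dj_n}$ of them are non-empty, so Cauchy--Schwarz gives $\sum_{n'\preceq n}\sqrt{|T_{n'}|}\le\sqrt{2^{dj_n}|T_n|}$ and, combining with $L_{n'}|\X_{n'}|^\alpha \le L_n(f)|\X_n|^\alpha 2^{-\alpha j_n}$, the contribution of $n$ to the refined bound is
\[
\lesssim \big(2^{dj_n/2} + L_n(f)|\X_n|^\alpha\,2^{(d/2-\alpha)j_n}\big)\sqrt{|T_n|}\ \ \text{(convex)},\qquad
\lesssim 2^{dj_n} + L_n(f)|\X_n|^\alpha\,2^{(d/2-\alpha)j_n}\sqrt{|T_n|}\ \ \text{(exp-concave)}.
\]

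Because $d\le 2\alpha$, the second summand is non-increasing in $j_n$ while the first grows, so I would choose $j_n$ by balancing the two terms (rounding the real minimiser to an admissible integer, which costs only a constant factor). In the convex case $j_n \asymp \tfrac1\alpha\log_2\!\big(L_n(f)|\X_n|^\alpha\big)$ gives a per-leaf contribution $\lesssim \big(L_n(f)|\X_n|^\alpha\big)^{d/(2\alpha)}\sqrt{|T_n|}$, which for $d=1$ is precisely $\big(L_n(f)|\X_n|^\alpha\big)^{1/(2\alpha)}\sqrt{|T_n|}$; in the exp-concave case $j_n \asymp \tfrac{2}{d+2\alpha}\log_2\!\big(L_n(f)|\X_n|^\alpha\sqrt{|T_n|}\big)$ gives, for $d=1$, $\lesssim \big(L_n(f)|\X_n|^\alpha\big)^{2/(2\alpha+1)}|T_n|^{1/(2\alpha+1)}$. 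Taking for each leaf the better of "refine" and "do not refine" ($j_n=0$) recovers the minimum inside the sum, and taking the infimum over $\T\in\Pcal(\T_0)$ closes the argument.

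The hard part is the bookkeeping of the refinement rather than any single estimate: one must verify that the Theorem bound truly decomposes into an independent sum over the leaves of the target pruning (hence the insistence on the pre-Cauchy--Schwarz form), handle the capping of $j_n$ at $\depth(\T_0)-\depth(n)$, and treat the regime in which the optimal refinement would over-subdivide relative to $|T_n|$ — there one falls back on the crude per-leaf bound $\lesssim |T_n|$ (a local analogue of~\eqref{eq:worst_case}) and checks it still lies below the claimed rate when $L_n(f)|\X_n|^\alpha \gtrsim |T_n|^\alpha$. The remaining steps (the telescoping of $\theta_{n'}$ as in~\eqref{eq:bound_thetan}, the grid-precision term $\varepsilon/2$, the log factors) are routine and absorbed into $\lesssim$.
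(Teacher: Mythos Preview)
Your proposal is correct and matches the paper's own argument essentially step for step: the paper also fixes a pruning $\T$, extends each leaf $n$ by a regular subtree of depth $h_n$ (your $j_n$), applies the pre--Cauchy--Schwarz leafwise form of the Theorem~\ref{theorem:optimal_local_regret} bound, and then balances the growing $\sqrt{2^{h_n}|T_n|}$ (resp.\ $2^{h_n}$) term against the shrinking $L_n(f)|\X_n|^\alpha 2^{-\alpha h_n}\sqrt{2^{h_n}|T_n|}$ term, obtaining the same optimal $h_n$ you derive and then taking $h_n=0$ for the other branch of the minimum. Your discussion of capping $j_n$, discarding empty sub-leaves, and the over-subdivision fallback to the crude $|T_n|$ bound is exactly the bookkeeping the paper sweeps under the $\max\{0,\cdot\}$ in its choice of $h_n$ and the rough $\lesssim$.
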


\begin{proof}[{\bfseries of Corollary \ref{cor:complete_version_regret_flat_pruning_exp_concave}}]

We consider 2 cases: 
\begin{enumerate}
    \item \emph{Case $d < 2\alpha$} (i.e. $d=1,  \alpha \in (\frac{1}{2},1]$).

Let \(f \in \C^\alpha(\X,L)\) and \(L >0\) and fix any pruning $\T \in \Pcal(\T_0)$. We will apply Theorem~\ref{theorem:optimal_local_regret} to an extended pruning $\T'$, in which we extend each leaf $n \in \L(\T)$ by a regular tree of depth $h_n \in \mathbb N$ to be optimized later in the proof. In particular, for each leaf $n$ in the original pruning $\T$, $\T'$ has $2^{h_n}$ leaves $m$ at depth $\depth(m) = \depth(n) + h_n \geq \depth(n)$ with $L_m(f) \leq L_n(f)$. In particular, when $h_n = 0$, the original pruning $\T$ is recovered.

\begin{enumerate}

\item \emph{ Case \((\ell_t)\) convex:}

Thanks to Theorem \ref{theorem:optimal_local_regret} (without applying Inequality~\eqref{eq:regret_pruning_convex_loss} in the term depending on $C_3$), one has for \(d = 1 < 2\alpha\): 
\begin{align}
    \Reg_T(f) 
        &\leq 2C_3 \sqrt{\log\big(K|\N(\T_0)|\big)} \sum_{m \in \L(\T')} \sqrt{|T_m|} + C_4G|\L(\T_1)| \nonumber \\
        & \quad + G\psi_1  \sum_{m \in \L(\T')} L_m(f)  |\X_m|^\alpha  \sqrt{|T_m|}, \nonumber \\
        &\leq \min_{h_n \in \mathbb N} \left\{ C \sum_{n \in \L(\T)} \Big(\sqrt{2^{h_n} |T_n|} + 2^{h_n} + L_n(f) |\X_n|^\alpha 2^{-\alpha h_n} \sqrt{2^{h_n} |T_n|}\Big) \right\}
    \label{eq:all_pruning_bound_convex}
\end{align}
where $C >0$ is some constant that depends on $C_3,C_4,G,X, \log(T), B$ and $\psi_1$ (defined in Theorem~\ref{theorem:optimal_local_regret}) but independent of other quantities $L_n(f), T, T_n$, that is used to simplify the presentation and may change from a display to another along the proof. Then, optimizing over $h_n$ so that
\[
    \sqrt{ 2^{h_n}|T_n| } =  L_n(f)  |\X_n|^\alpha 2^{-\alpha h_n} \sqrt{2^{h_n} |T_n|} \,,
\]
we set
\[
    h_n = \max\left\{ 0, \frac{1}{\alpha} \log_2 \left(L_n(f)|\X_n|^\alpha \right) \right\} \geq 0
\]
which yields
\[
\Reg_T(f) \leq C \sum_{n \in \L(\T)} \min\left\{1 + L_n(f)|\X_n|^\alpha, \Big(L_n(f) |\X_n|^\alpha \Big)^{\frac{1}{2\alpha}}\right\} \sqrt{|T_n|}  \,. 
\]

\item \emph{Case \((\ell_t)\) exp-concave:}

Since \((\ell_t)\) are exp-concave, Theorem \ref{theorem:optimal_local_regret} (with Inequality \eqref{eq:regret_pruning_exp_concave_loss}) gives, for \(d < 2\alpha\), for any extension $\T_1$ of pruning $\T \in \Pcal(\T_0)$:

\begin{align}
    \Reg_T(f)
        & \leq C\left(|\L(\T_1)| + \sum_{m \in \L(\T_1)} L_m(f)|\X_m|^\alpha \sqrt{|T_m|}\right), \\
        &\leq  C \sum_{n \in \L(\T)} \Big(2^{h_n} + L_n(f) |\X_n|^\alpha 2^{-\alpha h_n} \sqrt{2^{h_n}|T_n|}\Big) \,,
\label{eq:all_pruning_bound}
\end{align}
where again $C >0$ is a constant independent of $L$, $L_n(f)$, $|T_n|$ and $h_n$ that may change from a display to another. Optimizing over $h_n$ by equalizing the terms:
\[
    2^{h_n} = L_n(f) |\X_n|^\alpha 2^{-\alpha h_n} \sqrt{2^{h_n}|T_n|}
\]
leads to
\[
    h_n = \max\left\{0, \frac{2}{2\alpha + 1} \log_2 \big(L_n(f) |\X_n|^{\alpha} |T_n|^{\frac{1}{2}}\big) \right\} \,,
\]
which yields and concludes the proof:
\begin{equation*}
    \Reg_T(f) \leq C \sum_{n \in \L(\T)}  \min \left\{ L_n(f)|\X_n|^\alpha \sqrt{|T_n|}, \big(L_n(f)|\X_n|^\alpha \big)^{\frac{2}{2\alpha + 1}} |T_n|^{\frac{1}{2\alpha + 1}}  \right\}
\end{equation*}

\end{enumerate} 

\item \emph{Case $d = 2\alpha$.}

The proof is the same as for the case $d < 2\alpha$ but with $C$ now depending on $\psi_2$ (also defined in Theorem \ref{theorem:minimax_CT_param_free}) rather than $\psi_1$. We get the same result.




\end{enumerate} 
\end{proof}

\newpage
\section{Proof of Equation \eqref{eq:simple_pruning_bound}}

\label{appendix:proof_simplified_bound}

One has, for any pruning $\T \in \Pcal(\T_0)$ and some $f \in \C^\alpha(\X,L)$:
\begin{align*} 
\Reg_T(f) \lesssim \sum_{n \in \L(\T)} \big( L_n(f) |\X_n|^\alpha \big)^{\frac{2}{2\alpha + 1}} |T_n|^{\frac{1}{2\alpha + 1}} &= \sum_{n \in \L(\T)} \big(L_n(f)^{\frac{1}{\alpha}}|\X_n| \big)^{\frac{2\alpha}{2\alpha + 1}} |T_n|^{\frac{1}{2\alpha + 1}} \\
& \leq  \Big(\sum_{n \in \L(\T)} L_n(f)^{\frac{1}{\alpha}} |\X_n|\Big)^{\frac{2\alpha}{2\alpha + 1}} \Big|\sum_{n \in \L(\T)} T_n\Big|^{\frac{1}{2\alpha + 1}} \\
&= \Big(\sum_{n \in \L(\T)} L_n(f)^{\frac{1}{\alpha}} |\X_n|\Big)^{\frac{2\alpha}{2\alpha + 1}} |T|^{\frac{1}{2\alpha + 1}}
\end{align*}
where inequality is obtained with Hölder's inequality with $p = (2\alpha + 1)/2\alpha$ and $q = 2\alpha +1$.

One could also write: \[\Big(\sum_{n \in \L(\T)} L_n(f)^{\frac{1}{\alpha}} |\X_n|\Big)^{\frac{2\alpha}{2\alpha + 1}} = \left(|\X| \sum_{n \in \L(\T)} L_n(f)^{\frac{1}{\alpha}} \frac{|\X_n|}{|\X|}\right)^{\frac{2\alpha}{2\alpha + 1}} := \left(|\X|^\alpha \| f\|_{\L(\T),\frac{1}{\alpha}}\right)^{\frac{2}{2\alpha +1}},\]
where $f \mapsto \|f\|_{\L(\T),\frac{1}{\alpha}}$ is some $\frac{1}{\alpha}$-norm (or expectation) of $f$ over leaves $n \in \L(\T)$ with probability $|\X_n|/|\X| = 2^{-\depth(n)}, n \in \L(\T)$.






\newpage
\section{Comparison with \cite{kuzborskij2020locally}}

\label{appendix:comparison_with_CB}

 Let $f \in \C^\alpha(\X,L), (M^{(k)})_{1\leq k\leq \depth(\T_0)}$ such that $M^{(k)} \geq L_n(f)$ for any $n \in \N(\T_0), \depth(n) = k$ and $T^{(k)} = \{1 \leq t \leq T : x_t \in \X_n, \depth(n) = k\}$. Let $\T$ be any pruning and let $\alpha = 1, d = 1$. We have for the squared (exp-concave) loss, according to Corollary \ref{cor:regret_flat_pruning_exp_concave}:
\begin{align}
    \Reg_T(f) &\lesssim \sum_{n \in \L(\T)} \min\left\{ \left(L_n(f) \frac{|\X_n|}{|\X|}\right)^{\frac{2}{3}} |T_n|^{\frac{1}{3}}, \left(L_n(f)\frac{|\X_n|}{|\X|}\right)^{\frac{1}{2}}|T_n|^{\frac{1}{2}}\right\} & \notag \\
    &= \sum_{k=1}^{\depth(\T)} \sum_{n \in \L(\T):\depth(n)=k} \min\left\{ \left(L_n(f) \frac{|\X_n|}{|\X|}\right)^{\frac{2}{3}} |T_n|^{\frac{1}{3}}, \left(L_n(f)\frac{|\X_n|}{|\X|}\right)^{\frac{1}{2}}|T_n|^{\frac{1}{2}}\right\} & \notag\\
    &\leq \sum_{k=1}^{\depth(\T)} \min\left\{ \left(\sum_{n \in \L(\T):\depth(n)=k} L_n(f) 2^{-k}\right)^{\frac{2}{3}} |T^{(k)}|^{\frac{1}{3}},  \left(\sum_{n \in \L(\T):\depth(n)=k}  L_n(f)2^{-k}\right)^{\frac{1}{2}}|T^{(k)}|^{\frac{1}{2}}\right\} & \label{eq:holder1} \\
    &\leq \sum_{k=1}^{\depth(\T)} \min\left\{ \left(M^{(k)}w^{(k)}\right)^{\frac{2}{3}} |T^{(k)}|^{\frac{1}{3}},  \left(M^{(k)}w^{(k)}\right)^{\frac{1}{2}} |T^{(k)}|^{\frac{1}{2}}\right\} & \notag \\
    &\leq \min\left\{\left(\bar M(f)\right)^{\frac{2}{3}}T^{\frac{1}{3}}, \left(\bar M(f)T\right)^{\frac{1}{2}}\right\} & \label{eq:holder2}
\end{align}
where $\bar M(f) = \sum_{k=1}^{\depth(\T)} w^{(k)}M^{(k)}$ with $w^{(k)} = \sum_{n \in \L(\T) : \depth(n) = k} \frac{|\X_n|}{|\X|} = 2^{-k}|\{n \in \L(\T) : \depth(n) = k\}|$ the proportion of leaves in $\L(\T)$ at level $k$ in $\N(\T_0)$ and where we applied Hölder's inequality to get \eqref{eq:holder1} and \eqref{eq:holder2}. The last upper-bound  recovers and improves the one of \citet{kuzborskij2020locally} for dimension $d=1$, as described in the main part of the paper. For higher dimension $d \geq 2$, both for exp-concave and convex losses, Theorem \ref{theorem:optimal_local_regret} gives for any pruning $\T$ and any function $f \in \C^1(\X,L)$  (ignoring the dependence on $\X$):
\begin{align*} \Reg_T(f) \lesssim \sum_{n \in \L(\T)} L_n(f) |T_n|^{1-\frac{1}{d}} &= \sum_{k=1}^{\depth(\T)} \sum_{n \in \L(\T):\depth(n)=k} L_n(f) |T_n|^{\frac{d-1}{d}} \\ &\leq \sum_{k=1}^{\depth(\T)} M^{(k)} |\{n :\depth(n)=k\}|^{\frac{1}{d}} |T^{(k)}|^{\frac{d-1}{d}},
\end{align*}
which grows as $O\big(\sum_k M^{(k)}|T^{(k)}|^{\frac{d-1}{d}}\big)$ compared to $O\big(\sum_k (M^{(k)}|T^{(k)}|)^{\frac{d}{d+1}}\big)$ in \cite{kuzborskij2020locally}. As a consequence, if for every level $k = 1, \dots, \depth(\T)$, $M^{(k)} |T^{(k)}|^{\frac{d-1}{d}} \geq (M^{(k)}|T^{(k)}|)^{\frac{d}{d+1}}$ it turns out that $M^{(k)} \geq |T^{(k)}|^{\frac{1}{d}}$ which leads to an equivalent bound $O(\sum_k |T^{(k)}|) = O(T)$ which corresponds to the worst case regret bound \eqref{eq:worst_case}. As a conclusion, our bound recovers and improves their results in particular with a dependence to lower constants $L_n(f)$ and lower time rate $|T_n|^{1-\frac 1 d}$.

\newpage 
\section{Experiments}
\label{appendix:xp}

The following presents experimental results in a synthetic regression setting for both the Chaining Tree method (Algorithm \ref{alg:training_CT}) and the Locally Adaptive Online Regression (Algorithm \ref{alg:Local_Adapt_Algo}. We consider the model \( y_t = f(x_t) + \varepsilon_t \), where \( \varepsilon_t \sim \mathcal{N}(0, \sigma^2) \) with \( \sigma = 0.5, f(x) = \sin(10x) + \cos(5x) + 5 \), for \( x \in \mathcal{X} = [0, 1] \) and \( \sup_x |f'(x)| \leq 15 =: L\). Furthermore, we assume that \( x_t \) is independently drawn from the uniform distribution \( \mathcal{U}(\mathcal{X})\).

Refer to Theorem \ref{theorem:minimax_CT_param_free}, Theorem \ref{theorem:optimal_local_regret} and Corollary \ref{cor:regret_flat_pruning_exp_concave} in the paper to compare experimental results to theoretical guarantees. Figures can be reproduced using the code available on \href{https://github.com/paulliautaud/Minimax-Locally-Adaptive-Online-Regression}{GitHub}.

\textit{Key observations:}
\begin{itemize}[nosep,topsep=-\parskip]
    \item For the squared loss, $\ell_t(\hat y) = (\hat y - y_t)^2$, Algorithm \ref{alg:Local_Adapt_Algo} achieves a time rate of $O(T^{\frac{1}{3}})$ compared to $O(\sqrt{T})$ for Algorithm \ref{alg:training_CT} - see Figure \ref{fig:xp_square_loss}. However, the trade-off is an increased dependence on the smoothness $L$, shifting from $O(L^{\frac{1}{2}})$ to $O(L^{\frac{2}{3}})$;
    \item We observe in Figure \ref{fig:xp_lip_combined} that Algorithm \ref{alg:Local_Adapt_Algo} reduces regret with respect to $L$: it achieves $O(\sqrt{L})$ for absolute loss in Fig. \ref{fig:xp_lip_abs} and $O(L^{\frac{2}{3}})$ for square loss in Fig. \ref{fig:xp_lip_square};
    \item In Figure \ref{fig:xp_lip_square} and Figure \ref{fig:xp_lip_abs}, we observe that both the experimental and theoretical curves level off once \(L\) increases beyond a certain threshold $L_0 \gtrsim BT$. Indeed, we demonstrated that for any \(f \in \C^\alpha(\X,L)\),
\[
\Reg_T(f) \lesssim \min\bigg\{BT, \sum_n L_n(f)|T_n|^{1 - \frac{1}{d}}\bigg\}.
\]
See  Appendix \ref{appendix:proof_theorem:optimal_local_regret} and Equation \eqref{eq:worst_case} for more details.
\end{itemize}

\begin{figure}[htbp]
    \centering
    \subfigure[Regret with $\ell_t(\hat y) = (\hat y - y_t)^2$]{
        \input{reg_square_loss}
        \label{fig:xp_square_loss}
    }
    \hfill 
    \subfigure[Regret with $\ell_t(\hat y) = |\hat y - y_t|$]{
        \input{reg_absolute_loss}
        \label{fig:xp_absolute_loss}
    }
    \caption{Comparison of regret as a function of \(T\) for square and absolute loss functions. The dotted lines represent the theoretical results (where $O$ hides terms in $\log T$), while the solid lines show the actual performance of our algorithms (mean $\pm$ std over $5$ runs).}
    \label{fig:regret_comparison}
\end{figure}

\newpage

\begin{figure}[htbp]
    \centering
    \subfigure[Regret with $\ell_t(\hat y) = (\hat y - y_t)^2$]{
        \begin{tikzpicture}[scale = 0.85]

\definecolor{darkgray176}{RGB}{176,176,176}
\definecolor{green}{RGB}{0,128,0}
\definecolor{green01270}{RGB}{0,127,0}
\definecolor{lightgray204}{RGB}{204,204,204}

\begin{axis}[
legend cell align={left},
legend style={
  fill opacity=0.8,
  draw opacity=1,
  text opacity=1,
  at={(0.97,0.03)},
  anchor=south east,
  draw=none
},
log basis y={10},
tick align=outside,
tick pos=left,
x grid style={darkgray176},
xlabel={\(\displaystyle L\)},
xmin=-0.95, xmax=19.95,
xtick style={color=black},
xtick={0,1,2,3,4,5,6,7,8,9,10,11,12,13,14,15,16,17,18,19},
xticklabel style={
    rotate=45,               
    font=\small,             
    yshift=0.5ex             
},
xticklabels={
  $2.3 \times 10^{-1}$,
  ,
  $3.3 \times 10^{1}$,
  ,
  $6.6 \times 10^{1}$,
  ,
  $9.9 \times 10^{1}$,
  ,
  $1.3 \times 10^{2}$,
  ,
  $1.7 \times 10^{2}$,
  ,
  $2 \times 10^{2}$,
  ,
  $2.3 \times 10^{2}$,
  ,
  $2.6 \times 10^{2}$,
  ,
  $3 \times 10^{2}$,
  ,
  $3.3 \times 10^{2}$,
  ,
  $3.6 \times 10^{2}$,
  ,
  $4 \times 10^{2}$,
  ,
  $4.3 \times 10^{2}$,
  ,
  $4.6 \times 10^{2}$,
  ,
},
y grid style={darkgray176},
ylabel={\(\displaystyle \operatorname{Reg}_T(f)\)},
ymin=81.4836947858042, ymax=20088.8460922775,
ymode=log,
ytick style={color=black},
ytick={1,10,100,1000,10000,100000,1000000},
yticklabels={
  \(\displaystyle {10^{0}}\),
  \(\displaystyle {10^{1}}\),
  \(\displaystyle {10^{2}}\),
  \(\displaystyle {10^{3}}\),
  \(\displaystyle {10^{4}}\),
  \(\displaystyle {10^{5}}\),
  \(\displaystyle {10^{6}}\)
}
]
\path [draw=green, fill=green, opacity=0.2]
(axis cs:0,1401.02173523303)
--(axis cs:0,1349.85665463914)
--(axis cs:1,1721.0037051897)
--(axis cs:2,2146.65304909307)
--(axis cs:3,2562.78180806498)
--(axis cs:4,2808.00997424685)
--(axis cs:5,3128.11054619216)
--(axis cs:6,3213.0802544429)
--(axis cs:7,3190.2716704239)
--(axis cs:8,3217.60516606567)
--(axis cs:9,3253.90981898539)
--(axis cs:10,3309.16817502454)
--(axis cs:11,3299.12862817825)
--(axis cs:12,3325.87618031563)
--(axis cs:13,3221.21242530759)
--(axis cs:14,3305.86680139052)
--(axis cs:15,3333.27376647312)
--(axis cs:16,3318.37216636622)
--(axis cs:17,3233.4769752122)
--(axis cs:18,3238.53481860946)
--(axis cs:19,3322.5740532311)
--(axis cs:19,3494.51438494039)
--(axis cs:19,3494.51438494039)
--(axis cs:18,3406.46936702271)
--(axis cs:17,3447.44302887082)
--(axis cs:16,3451.48153040912)
--(axis cs:15,3468.38548460006)
--(axis cs:14,3475.43195875116)
--(axis cs:13,3412.12219129109)
--(axis cs:12,3476.9851742991)
--(axis cs:11,3400.17658697684)
--(axis cs:10,3476.45829231927)
--(axis cs:9,3348.86704569375)
--(axis cs:8,3447.70605321313)
--(axis cs:7,3386.63503319873)
--(axis cs:6,3380.40960537412)
--(axis cs:5,3267.44406299606)
--(axis cs:4,3267.29286625945)
--(axis cs:3,2989.86444723215)
--(axis cs:2,2488.64038391303)
--(axis cs:1,1871.84252451784)
--(axis cs:0,1401.02173523303)
--cycle;

\path [draw=red, fill=red, opacity=0.2]
(axis cs:0,123.502230784003)
--(axis cs:0,104.662890843286)
--(axis cs:1,301.31207672288)
--(axis cs:2,614.512225367162)
--(axis cs:3,857.700695622485)
--(axis cs:4,1008.65935378922)
--(axis cs:5,1269.07985154328)
--(axis cs:6,1385.89147058775)
--(axis cs:7,1498.48637255773)
--(axis cs:8,1413.62501197232)
--(axis cs:9,1712.63927164402)
--(axis cs:10,1816.8691913625)
--(axis cs:11,1842.79987202526)
--(axis cs:12,1872.2635392331)
--(axis cs:13,1862.70390698172)
--(axis cs:14,1963.31750108658)
--(axis cs:15,1945.4662202122)
--(axis cs:16,1915.396287515)
--(axis cs:17,1947.15028785311)
--(axis cs:18,1951.79509503216)
--(axis cs:19,2032.5288909674)
--(axis cs:19,2085.07620917088)
--(axis cs:19,2085.07620917088)
--(axis cs:18,2072.6534964808)
--(axis cs:17,2069.4997693626)
--(axis cs:16,2095.60125780366)
--(axis cs:15,2023.02078376294)
--(axis cs:14,2034.99676878472)
--(axis cs:13,1984.80816336713)
--(axis cs:12,1980.25795637)
--(axis cs:11,1963.80385985715)
--(axis cs:10,1947.15999993443)
--(axis cs:9,1800.65880547748)
--(axis cs:8,1929.85511261043)
--(axis cs:7,1658.5365056486)
--(axis cs:6,1608.12063441739)
--(axis cs:5,1397.78046926416)
--(axis cs:4,1329.56460093773)
--(axis cs:3,1088.71280700811)
--(axis cs:2,702.545555512446)
--(axis cs:1,417.321390888949)
--(axis cs:0,123.502230784003)
--cycle;

\addplot [semithick, green01270, dash pattern=on 5.55pt off 2.4pt]
table {%
0 1228.18233935842
1 4812.32136744587
2 8483.26754965525
3 12137.5363975726
4 14970.8617642846
5 15068.5915702398
6 15024.2303816984
7 15248.9602409548
8 15303.4136824979
9 15271.2452045714
10 15244.9464765681
11 15199.9251096312
12 14932.8854385535
13 15281.2916758713
14 15509.221897086
15 14941.9971792576
16 15630.9575396471
17 15639.864238349
18 15329.6511900994
19 15331.2677574006
};
\addlegendentry{\scriptsize $O\big(L\sqrt{T} \wedge BT \big)$ - Th. \ref{theorem:minimax_CT_param_free}}
\addplot [semithick, red, dash pattern=on 5.55pt off 2.4pt]
table {%
0 532.451608046926
1 2465.37002242276
2 3462.85178254721
3 4226.74737204146
4 5163.67341498628
5 5461.35756957707
6 5902.07959533346
7 6714.65171452375
8 6840.77704658768
9 7413.23432815047
10 7801.08849476368
11 7801.15995204166
12 8295.52853020267
13 8658.68414024804
14 9120.41193839395
15 9959.72011055941
16 9718.43757939153
17 10270.4024113989
18 10260.4965910536
19 10969.056996573
};
\addlegendentry{\scriptsize $O\big(L^{\frac{2}{3}}T^\frac{1}{3} \wedge \sqrt{LT} \wedge BT \big)$ - Cor. \ref{cor:regret_flat_pruning_exp_concave}}
\addplot [semithick, green01270]
table {%
0 1391.26937465527
1 1772.91338431126
2 2192.80644848053
3 2635.05110635475
4 3076.53091590526
5 3145.86877282596
6 3313.43722950528
7 3302.84730325996
8 3300.17837099399
9 3267.52012502288
10 3418.57651107218
11 3370.42443517804
12 3377.73286412557
13 3378.45052120443
14 3450.25824139761
15 3393.07917995571
16 3428.08499860237
17 3304.8669580166
18 3376.27271146581
19 3363.3190780419
};
\addlegendentry{\scriptsize Chaining Tree - Alg. \ref{alg:training_CT}}
\addplot [semithick, red]
table {%
0 115.731485703484
1 358.898997009765
2 671.264342434045
3 951.97764075724
4 1150.45460083763
5 1318.53638232848
6 1449.47973020119
7 1623.06626257376
8 1831.18741674908
9 1773.99677399357
10 1927.70258458375
11 1901.00459563539
12 1913.85842150133
13 1944.94081896628
14 2016.53219293091
15 1982.83141342527
16 2014.79694570892
17 1983.3829201269
18 2017.49163909754
19 2071.4457767697
};
\addlegendentry{\scriptsize Locally Adaptive Online Reg. - Alg. \ref{alg:Local_Adapt_Algo}}
\end{axis}

\end{tikzpicture}
        \label{fig:xp_lip_square}
    }
    \hfill
    \subfigure[Regret with $\ell_t(\hat y) = |\hat y - y_t|$]{
        \begin{tikzpicture}[scale = 0.85]

\definecolor{darkgray176}{RGB}{176,176,176}
\definecolor{green}{RGB}{0,128,0}
\definecolor{green01270}{RGB}{0,127,0}
\definecolor{lightgray204}{RGB}{204,204,204}

\begin{axis}[
legend cell align={left},
legend style={
  fill opacity=0.8,
  draw opacity=1,
  text opacity=1,
  at={(0.97,0.03)},
  anchor=south east,
  draw=none
},
log basis y={10},
tick align=outside,
tick pos=left,
x grid style={darkgray176},
xlabel={\(\displaystyle L\)},
xmin=-1.45, xmax=30.45,
xtick style={color=black},
xtick={0,1,2,3,4,5,6,7,8,9,10,11,12,13,14,15,16,17,18,19,20,21,22,23,24,25,26,27,28,29},
xticklabel style={
    rotate=45,               
    font=\small,             
    yshift=0.5ex             
},
xticklabels={
  $2.3 \times 10^{-1}$,
  ,
  $3.3 \times 10^{1}$,
  ,
  $6.6 \times 10^{1}$,
  ,
  $9.9 \times 10^{1}$,
  ,
  $1.3 \times 10^{2}$,
  ,
  $1.7 \times 10^{2}$,
  ,
  $2 \times 10^{2}$,
  ,
  $2.3 \times 10^{2}$,
  ,
  $2.6 \times 10^{2}$,
  ,
  $3 \times 10^{2}$,
  ,
  $3.3 \times 10^{2}$,
  ,
  $3.6 \times 10^{2}$,
  ,
  $4 \times 10^{2}$,
  ,
  $4.3 \times 10^{2}$,
  ,
  $4.6 \times 10^{2}$,
  ,
},
y grid style={darkgray176},
ylabel={\(\displaystyle \operatorname{Reg}_T(f)\)},
ymin=38.4630715512988, ymax=9170.07359621052,
ymode=log,
ytick style={color=black},
ytick={1,10,100,1000,10000,100000},
yticklabels={
  \(\displaystyle {10^{0}}\),
  \(\displaystyle {10^{1}}\),
  \(\displaystyle {10^{2}}\),
  \(\displaystyle {10^{3}}\),
  \(\displaystyle {10^{4}}\),
  \(\displaystyle {10^{5}}\)
}
]
\path [draw=green, fill=green, opacity=0.2]
(axis cs:0,52.9073221612647)
--(axis cs:0,49.3292332968405)
--(axis cs:1,153.806874986131)
--(axis cs:2,259.417361325582)
--(axis cs:3,378.869670429165)
--(axis cs:4,524.282170021799)
--(axis cs:5,554.202194513371)
--(axis cs:6,704.971937438064)
--(axis cs:7,823.643775644355)
--(axis cs:8,888.448197888621)
--(axis cs:9,816.40004167067)
--(axis cs:10,919.619524938383)
--(axis cs:11,948.818314501921)
--(axis cs:12,942.926047171483)
--(axis cs:13,987.99546803176)
--(axis cs:14,961.828420172007)
--(axis cs:15,967.508988820581)
--(axis cs:16,1010.77349990841)
--(axis cs:17,1009.66389756154)
--(axis cs:18,1019.46044531985)
--(axis cs:19,981.43666164935)
--(axis cs:20,979.741648101916)
--(axis cs:21,982.173688900263)
--(axis cs:22,975.503124699174)
--(axis cs:23,983.667127029819)
--(axis cs:24,961.541141339699)
--(axis cs:25,1013.36952189853)
--(axis cs:26,963.062867385486)
--(axis cs:27,1023.43599342211)
--(axis cs:28,992.69813989486)
--(axis cs:29,969.666702405361)
--(axis cs:29,1014.61641165734)
--(axis cs:29,1014.61641165734)
--(axis cs:28,1024.98045825907)
--(axis cs:27,1037.98009007565)
--(axis cs:26,1013.53510715325)
--(axis cs:25,1049.82264369189)
--(axis cs:24,1060.91319327565)
--(axis cs:23,1013.51797255503)
--(axis cs:22,1008.36358264406)
--(axis cs:21,1038.46676314726)
--(axis cs:20,1039.8248489073)
--(axis cs:19,1031.17978764957)
--(axis cs:18,1030.51390417376)
--(axis cs:17,1029.13284976411)
--(axis cs:16,1024.78981769866)
--(axis cs:15,1041.15529126541)
--(axis cs:14,1009.69179999668)
--(axis cs:13,1039.70633251273)
--(axis cs:12,987.567333281533)
--(axis cs:11,1010.55612302135)
--(axis cs:10,1026.9285267767)
--(axis cs:9,985.541307619796)
--(axis cs:8,921.608955654846)
--(axis cs:7,860.302951740679)
--(axis cs:6,945.713749365595)
--(axis cs:5,722.19233935038)
--(axis cs:4,565.26677564234)
--(axis cs:3,441.377565619606)
--(axis cs:2,300.198531484422)
--(axis cs:1,165.17222568846)
--(axis cs:0,52.9073221612647)
--cycle;

\path [draw=red, fill=red, opacity=0.2]
(axis cs:0,64.2186766120574)
--(axis cs:0,59.8232060837994)
--(axis cs:1,123.100333939021)
--(axis cs:2,163.370047282699)
--(axis cs:3,224.575870896228)
--(axis cs:4,271.031312527141)
--(axis cs:5,281.574499695604)
--(axis cs:6,347.143654716387)
--(axis cs:7,389.7371176947)
--(axis cs:8,432.709954678843)
--(axis cs:9,438.618490513994)
--(axis cs:10,497.205431722075)
--(axis cs:11,532.801202103185)
--(axis cs:12,521.365302824791)
--(axis cs:13,625.285589207059)
--(axis cs:14,619.796270963531)
--(axis cs:15,677.982020315766)
--(axis cs:16,692.6298359347)
--(axis cs:17,727.938886292242)
--(axis cs:18,746.298126052696)
--(axis cs:19,796.145328979575)
--(axis cs:20,783.403443936386)
--(axis cs:21,817.000108171288)
--(axis cs:22,819.817759032143)
--(axis cs:23,848.22680563992)
--(axis cs:24,821.193469930381)
--(axis cs:25,893.320940363289)
--(axis cs:26,862.877584789618)
--(axis cs:27,912.615791885327)
--(axis cs:28,911.702722146265)
--(axis cs:29,913.666292401596)
--(axis cs:29,948.673226533345)
--(axis cs:29,948.673226533345)
--(axis cs:28,942.609556135712)
--(axis cs:27,943.774180392609)
--(axis cs:26,901.949699768672)
--(axis cs:25,926.598595661527)
--(axis cs:24,880.045663840577)
--(axis cs:23,862.925026087151)
--(axis cs:22,845.821421021278)
--(axis cs:21,848.357714275512)
--(axis cs:20,816.014789611975)
--(axis cs:19,799.923061090217)
--(axis cs:18,772.670124034499)
--(axis cs:17,746.82152876137)
--(axis cs:16,721.091134478836)
--(axis cs:15,706.689694596812)
--(axis cs:14,666.214020469652)
--(axis cs:13,644.635595849473)
--(axis cs:12,608.956699620385)
--(axis cs:11,557.601862005179)
--(axis cs:10,511.246377881711)
--(axis cs:9,507.579964509402)
--(axis cs:8,448.166852527953)
--(axis cs:7,392.612094798223)
--(axis cs:6,374.697737639242)
--(axis cs:5,316.366239908685)
--(axis cs:4,291.717917140463)
--(axis cs:3,255.922710764474)
--(axis cs:2,195.488650644756)
--(axis cs:1,133.432719463485)
--(axis cs:0,64.2186766120574)
--cycle;

\addplot [semithick, green01270, dash pattern=on 5.55pt off 2.4pt]
table {%
0 177.928283987821
1 740.284224270483
2 1302.64016455314
3 1864.99610483581
4 2427.35204511847
5 2989.70798540113
6 3552.06392568379
7 4114.41986596645
8 4676.77580624911
9 5239.13174653177
10 5801.48768681444
11 6235.84854404574
12 6622.60578039599
13 6641.60511671547
14 6675.39918542521
15 6757.99561991331
16 6128.12094815419
17 6221.49082132613
18 6093.58217311641
19 6340.34319650332
20 6067.66526441397
21 6110.6180016251
22 7150.10498418414
23 6087.33585131182
24 6402.33561241631
25 6383.6910020534
26 6597.09002548831
27 5772.63806073965
28 6173.79488859897
29 6589.39235059562
};
\addlegendentry{\scriptsize $O\big(L\sqrt{T} \wedge BT \big)$ - Th. \ref{theorem:minimax_CT_param_free}}
\addplot [semithick, red, dash pattern=on 5.55pt off 2.4pt]
table {%
0 151.064948577729
1 573.29982612907
2 796.570626241297
3 969.730289586379
4 1116.34623056295
5 1245.82542053168
6 1363.06017856651
7 1470.98100679261
8 1571.50793644049
9 1665.97997174224
10 1755.37498506647
11 1840.43294095382
12 1921.72981506214
13 1999.72436951936
14 2074.78905578384
15 2147.2311701238
16 2217.3077751864
17 2285.23649582552
18 2351.20350327311
19 2415.36953399569
20 2477.87450402667
21 2538.84109972567
22 2598.3776095108
23 2656.58018391275
24 2713.53465898424
25 2769.31804194815
26 2823.99973254432
27 2877.64253536835
28 2930.30350532364
29 2982.03465862871
};
\addlegendentry{\scriptsize $O\big(\sqrt{LT} \wedge BT \big)$ - Cor. \ref{cor:regret_flat_pruning_exp_concave}}
\addplot [semithick, green01270]
table {%
0 51.6760951899452
1 161.132029365463
2 290.800974027772
3 415.663401067139
4 552.076773376039
5 649.792420990309
6 803.413480285618
7 848.800445422186
8 916.744559403817
9 890.743134123288
10 969.669363925463
11 989.570026405372
12 980.776262621567
13 1014.9523091584
14 977.125473064868
15 1009.49824658761
16 1012.70407952094
17 1013.59316772133
18 1024.14552926031
19 1009.09745484264
20 1010.81102161494
21 992.877673300602
22 981.427430602521
23 1000.05057742232
24 1006.85193737011
25 1042.00596025416
26 995.136066794684
27 1028.0265978411
28 1013.05964085976
29 993.702585654629
};
\addlegendentry{\scriptsize Chaining Tree - Alg. \ref{alg:training_CT}}
\addplot [semithick, red]
table {%
0 61.3647669776824
1 129.342975475272
2 173.474808596227
3 235.293878354083
4 275.016853625318
5 306.346670946183
6 357.421137397545
7 391.287981392269
8 441.731002762657
9 480.83593278763
10 501.081337549706
11 550.690332574532
12 564.97184835096
13 632.371669849314
14 641.426229155427
15 689.507410205711
16 697.801364785118
17 735.638849175355
18 766.54321847983
19 796.646296586495
20 789.940278791305
21 833.901568652401
22 836.032829390098
23 855.284035657029
24 864.999478303436
25 919.619090865231
26 876.076620477321
27 932.994353381976
28 933.928355428081
29 936.518843933974
};
\addlegendentry{\scriptsize Locally Adaptive Online Reg. - Alg. \ref{alg:Local_Adapt_Algo}}
\end{axis}

\end{tikzpicture}
        \label{fig:xp_lip_abs}
    }
  \caption{Regret (mean $\pm$ std over $5$ runs) as a function of $L$ for square and absolute loss functions, with a fixed horizon of $T=2000$. The analysis uses 20 equally spaced constants $l \in [2^{-6}, 2^5]$, which define the different Lipschitz functions where we apply our algorithms, given by $f_l(x) = f(lx)$ such that $\sup_{x \in \X} |f'_l(x)| \leq 15l =: L$.}
    \label{fig:xp_lip_combined}
\end{figure}
\vspace{-0.5cm}
\begin{figure}[htbp]
\centering
    \input{predictions}
    \caption{Predictions for Chaining Tree (Alg. \ref{alg:training_CT}) and Locally Adaptive Online Regression (Alg. \ref{alg:Local_Adapt_Algo}) after $T=1000$ data. For illustration purposes, we set the depth of the Chaining Trees to $5$ and that of the Core Tree to $3$. }
    \label{fig:xp_pred}
\end{figure}
\vspace{-0.5cm}
\emph{Note:} A minor adjustment has been made to the implementation of the Locally Adaptive Online Regression algorithm (Alg. \ref{alg:Local_Adapt_Algo}). Rather than performing a grid search to determine the root nodes of the CT in Core Tree \(\T_0\), we employ a \emph{Follow the Leader} (or best expert) strategy. For squared losses, this method offers a similar benefit, that is reducing the regret for learning the root nodes from \(O(B\sqrt{T})\) to \(O(B\log(T))\) - see \citet[Chap 3.2]{cesa2006prediction}. Consequently, the overall performance bound is improved, especially in low-dimensional cases (see Corollary \ref{cor:regret_flat_pruning_exp_concave}, exp-concave case).

\end{document}